\numberwithin{equation}{section} %%% Equations numbered by section. If you don't want it, please delete it.
\begin{document}

 \PageNum{1}
 \Volume{201x}{Sep.}{x}{x}
 \OnlineTime{August 15, 201x}
 \DOI{0000000000000000}
 \EditorNote{Received x x, 201x, accepted x x, 201x}

\abovedisplayskip 6pt plus 2pt minus 2pt \belowdisplayskip 6pt
plus 2pt minus 2pt
%%%%%%%%%%%%%%%%
\def\vsp{\vspace{1mm}}
\def\th#1{\vspace{1mm}\noindent{\bf #1}\quad}
\def\proof{\vspace{1mm}\noindent{\it Proof}\quad}
\def\no{\nonumber}
\newenvironment{prof}[1][Proof]{\noindent\textit{#1}\quad }
{\hfill $\Box$\vspace{0.7mm}}
\def\q{\quad} \def\qq{\qquad}
\allowdisplaybreaks[4]

%%%%%%%%%%%%%%%%%%%%%%%%%%%%%%%%%%%%%%%%%%%%%%%%%%%%%%%%%%%%%%%%%%%%%%%%%%%%%%%%%%%%%%%%%%%%%%%
%%-------------------       Beginning of  Author's Definitions       -------------------%%
%%                     Note: You may add your own definitions here.

\def \k {\mathbbm{k}}
\def \End {\operatorname{End}}
\def \V {\operatorname{V}_{n,d}}
\def \U {\operatorname{U}_{n,d}}
\def \Ind {\operatorname{Ind}_{n,d}}
\def \dim {\operatorname{dim}}
\def \rk {\operatorname{Rank}}
\def \tr {\operatorname{trace}}
\def \Id {\operatorname{Id}}
\def \v {\operatorname{v}}
\def \GL {\operatorname{GL}}
\def \Im {\operatorname{Im}}
\def \Q {\mathbbm{Q}}
\def \R {\mathbbm{R}}
\def \C {\mathbbm{C}}
\def \Z {\mathbbm{Z}}

\numberwithin{equation}{section}
\numberwithin{table}{section}
\numberwithin{equation}{section}
\newtheorem{remarks}[theorem]{Remarks}
\newtheorem{convention}[theorem]{Convention}

%%-------------------         the end of  Author's Definitions           -------------------%%

\AuthorMark{Fang, Huang and Li}                             %%%  appear on the head of even pages  %%%

\TitleMark{Complexity analysis of diagonalization}  %%% Running Title, appear on the head of odd pages  %%%

\title{Numerical algorithm and complexity analysis for diagonalization of \\ multivariate homogeneous polynomials
\footnote{Supported by Key Program of Natural Science Foundation of Fujian Province (Grant no. 2024J02018) and National Natural Science Foundation of China (Grant no. 12371037).}}                  %%%   the Fund which you are supported by  %%%

\author{Lishan \uppercase{Fang}}             %%%  1st Author's information   %%%
    {School of Mathematical Sciences, Huaqiao University, Quanzhou 362021, China \\
    E-mail\,$:$ fanglishan@hqu.edu.cn}

\author{Hua-Lin \uppercase{Huang}}     %%%  2nd Author's info, if exists, or you may delete this part directly  %%%
    {School of Mathematical Sciences, Huaqiao University, Quanzhou 362021, China\\
    E-mail\,$:$ hualin.huang@hqu.edu.cn}

\author{Yuechen \uppercase{Li}}     %%%  2nd Author's info, if exists, or you may delete this part directly  %%%
    {School of Mathematical Sciences, Huaqiao University, Quanzhou 362021, China\\
    E-mail\,$:$  liyuechen1225@163.com}
    
\maketitle%

\Abstract{We study the computational complexity of a diagonalization technique for multivariate homogeneous polynomials, that is, expressing them as sums of powers of independent linear forms. It is based on Harrison's center theory and consists of a criterion and a diagonalization algorithm. Detailed formulations and computational complexity of each component of the technique are given. The complexity analysis focuses on the impacts of the number of variables and the degree of given polynomials. We show that this criterion runs in polynomial time and the diagonalization process performs efficiently in numerical experiments. Other diagonalization techniques are reviewed and compared in terms of complexity.}      % the abstract

\Keywords{multivariate homogeneous polynomial, diagonalization, complexity analysis}        % the keywords

\MRSubClass{15A20, 15A69}      % MR(2000) Subject Classification

\section{Introduction}\label{sec:intro}

\subsection{Problem statement}\label{sec:problem}
In this article, we are concerned about the diagonalization of any complex multivariate homogeneous polynomial $f(x_1, x_2, \ldots, x_n) \in\C[x_1,x_2,\ldots,x_n]$ of degree $d \ge 3$. A multivariate homogeneous polynomial $f$ of degree $d$ is called diagonalizable if, after a linear change of variables $x=Py,$ it can be written as 
\[ 
    f(x)=f(Py)=\lambda_1 y_1^d + \lambda_2 y_2^d + \cdots + \lambda_n y_n^d, 
\] 
where $\lambda_i \in \C$. If this is possible, determine the appropriate change of variables $x=Py$. This is a typical problem in classical invariant theory and also attracts attention from theoretical computer science and scientific computing.

\subsection{Related work}\label{sec:related}

The diagonalization of homogeneous polynomials has been studied by various researchers, which covers both criteria for diagonalizability and algorithms for diagonalization. Kayal~\cite{kayal2011efficient} presented a randomized polynomial time algorithm based on polynomial factorization to determine diagonalizability and diagonalize homogeneous polynomials. Koiran~\cite{koiran2021derandomization} proposed a deterministic criterion for diagonalizability of homogeneous polynomials with $d=3$. Built on this criterion, a new randomized black-box criterion for $f$ with $d>3$ was given in~\cite{koiran2023absolute}.  
%They also provided complexity analysis for both of the algorithms and showed that they ran in polynomial time. 
In contrast, Robeva~\cite{robeva2016orthogonal} addressed this problem from the perspective of symmetric tensors using the tensor power method. Some other studies are interested in the properties of diagonalization, such as orthogonality and unitarity~\cite{koiran2021orthogonal,kolda2000orthogonal,boralevi2017orthogonal,robeva2016orthogonal}.  

%However, this technique requires polynomial factorization, which admits random polynomial time~\cite{kaltofen1989factorization}.
 
%Harrison~\cite{harrison1975grothendieck} generalized Witt's algebraic theory of quadratic forms to higher degree forms. Note that higher degree forms and homogeneous polynomials of degree $\ge 3$ are synonyms. Thus, Harrison's center theory is applicable to the purpose of the present article. Huang et al.~\cite{huang2021diagonalizable} presented a diagonalization algorithm based on Harrison's center theory and properties of sums of powers. They argued that a homogeneous polynomial $f$ is diagonalizable if and only if its center algebra is~$Z(f)\cong \C \times \cdots \times \C$ ($n$ copies). The algorithm contains a criterion for diagonalizability and an iterative process to diagonalize $f$ with arbitrary $n$ and $d$, which only requires solving systems of linear and quadratic equations. In addition, it determines whether $f$ is orthogonally or unitarily diagonalized using Harrison's uniqueness of the decomposition.

Harrison~\cite{harrison1975grothendieck} generalized Witt's algebraic theory of quadratic forms to higher degree forms in the 1970s, which was later applied to decompose higher degree forms~\cite{huang2021diagonalizable, huang2021centres}. Huang et al.~\cite{huang2021diagonalizable} presented a diagonalization technique based on Harrison's center theory and properties of sums of powers. It contains a criterion for diagonalizability and an iterative process to diagonalize $f$ with arbitrary $n$ and $d$, which only requires solving systems of linear and quadratic equations. In addition, it determines whether $f$ is orthogonally or unitarily diagonalized using Harrison's uniqueness of the decomposition.

\subsection{Methods}\label{sec:method}
The diagonalization techniques of homogeneous polynomials generally become computationally expensive as the dimensions or degrees of the polynomials increase.
%While many techniques have been proposed to diagonalize homogeneous polynomials, few studies provide an in-depth analysis of their computational complexity. 
Some~\cite{kayal2011efficient, robeva2016orthogonal} rely on tasks such as polynomial factorization or the tensor power method, which become the computational bottleneck of their techniques when given polynomials become complex~\cite{kaltofen1989factorization}.
%Kayal~\cite{kayal2011efficient} and Robeva~\cite{robeva2016orthogonal}'s techniques rely on tasks like polynomial factorization or the tensor power method, which become the computational bottleneck of their techniques~\cite{kaltofen1989factorization}. 
Koiran and Skomra~\cite{koiran2021derandomization} provided the complexity analysis on a randomized and deterministic criterion, showing that their complexity is polynomially bounded. However, they may fail with a low probability as shown in their error analysis.

In this article, we provide a complexity analysis of the center-based diagonalization algorithm in~\cite{huang2021diagonalizable}. 
%We begin with a brief introduction of Harrison's theory of center and its connections to diagonalization. 
The criterion and diagonalization algorithm are divided into six steps, and their complexity is analyzed separately. Two examples are provided to illustrate the algorithm's performance. We also compare it with other techniques in terms of their computational complexity. Two numerical experiments are conducted to verify the theoretical analysis and results are provided in Appendices~\ref{app:cost} and~\ref{app:no_terms}.

\subsection{Organization of the paper}\label{sec:organize}

The remainder of this paper is organized as follows. We show Harrison's center theory and a diagonalization algorithm in Section~\ref{sec:center}. 
%The descriptions and analysis of the algorithm are presented in two parts: 
The criterion of diagonalizability is described in Section~\ref{sec:criteria}. The diagonalization process and the determination of orthogonality and unitarity are discussed in Section~\ref{sec:diagonalize}. We compare its computational complexity with other diagonalization techniques in Section~\ref{sec:compare}. Lastly, major findings are summarized in Section~\ref{sec:summary}. 
%We provide additional numerical results in Appendices~\ref{app:cost} and~\ref{app:no_terms} to validate our theoretical analysis. 

\section{Centers and diagonalization of homogeneous polynomials}\label{sec:center}

\subsection{Centers and diagonalization criterion}\label{sec:harrison_center}

%The notion of centers of homogeneous polynomials, introduced by Harrison \cite{harrison1975grothendieck} in the 1970s, may be applied to provide a simple criterion and algorithm for the problem of diagonalization.
%He showed that decompositions of a form are in bijection with the decompositions of the unit of its center algebra into orthogonal idempotents.

We recall some necessary definitions of center algebras of homogeneous polynomials below. More details are given in~\cite{huang2021diagonalizable}. Let $f(x_1, x_2, \ldots, x_n) \in \C[x_1, x_2, \ldots, x_n]$ be a homogeneous polynomial of degree $d$. Its center~$Z(f)$ is defined as
\begin{equation}\label{eqn:center}
	Z(f):=\left\{ X \in \C^{n \times n} \mid (HX)^T=HX \right\},
\end{equation}
where $H=\left(\frac{\partial^2 f}{\partial x_i \partial x_j}\right)_{1 \le i, j \le n}$ is the Hessian matrix of $f$. 
%It was shown that a homogeneous polynomial $f$ is diagonalizable if and only if its center algebra is~$Z(f)\cong \C \times \cdots %\times \C$ ($n$ copies).
    
\begin{remark} \label{sp}
Consider the sum of powers of variables $f(x_1, x_2, \ldots, x_n)=x_1^d+x_2^d+\cdots+x_n^d$. The Hessian matrix of $f$ is diagonal and its $ii$-entry is $d(d-1)x_i^{d-2}$. Thus, the $ij$-entry of $HX$ reads $d(d-1)x_i^{d-2}X_{ij}$ for $X=(X_{ij})_{1 \le i, j \le n}$. Since $HX$ is symmetric, we can see that $Z(f)$ consists of all diagonal matrices. Thus $Z(f)$ is a commutative associative subalgebra of the full matrix algebra and is isomorphic to $\C^n$ as algebras.
\end{remark}
	
A homogeneous polynomial $f$ is said to be nondegenerate if none of its variables can be removed by an invertible linear change of variables. In other words, $f$ is degenerate if and only if there exists a change of variables $x=Py$ such that 
    \[
        g\left(y_1, y_2,\ldots, y_n\right):=f(Py)=g\left(y_1, y_2, \ldots, y_{n-1}, 0\right),
    \]
which implies that $\frac{\partial g}{\partial y_n}=0.$ We also have $\frac{\partial g}{\partial y_n} = \sum_{i=1}^n \frac{\partial f}{\partial x_i} \frac{\partial x_i}{\partial y_n}$ by the chain rule of derivatives. Therefore, $f$ is degenerate if and only if its first-order differentials $\frac{\partial f}{\partial x_i}$ are linearly dependent. Additionally, any $f$ can be reduced to a nondegenerate one in a suitable number of variables. 

\begin{proposition}\label{prop:reduce}
    Let $f(x_1, x_2, \ldots, x_n)$ be as above and assume $\rk \left\{\frac{\partial f}{\partial x_1}, \frac{\partial f}{\partial x_2},\ldots, \frac{\partial f}{\partial x_n} \right\}=r.$ Then there exists a change of variables $x=Py$ such that
	\[ 
		g\left(y_1, y_2, \ldots, y_n\right):=f(Py)=g\left(y_1, y_2, \ldots, y_r, 0, \ldots, 0\right).
	\] 
Moreover, if we remove the redundant variables from $g(y)$ and view it as an $r$-variate polynomial, then it becomes nondegenerate. 
\end{proposition}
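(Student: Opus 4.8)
The plan is to combine the chain-rule identity for partial derivatives under a linear substitution, already recorded in the text just before the statement, with an elementary dimension count on the span of the first-order differentials.

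First I would introduce the linear map $\phi\colon \C^n \to W$, where $W$ is the space of homogeneous polynomials of degree $d-1$ in $x_1,\dots,x_n$, defined by $\phi(e_i)=\partial f/\partial x_i$. By hypothesis its image has dimension $r$, so $\dim \ker\phi = n-r$. Choose a basis $v_1,\dots,v_n$ of $\C^n$ whose last $n-r$ members $v_{r+1},\dots,v_n$ form a basis of $\ker\phi$, and let $P$ be the invertible matrix with columns $v_1,\dots,v_n$. Setting $g(y):=f(Py)$, the chain rule gives $\partial g/\partial y_j=\sum_{i=1}^n P_{ij}\,\partial f/\partial x_i=\phi(v_j)$, which vanishes for every $j>r$ by the choice of $P$. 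Since a polynomial over $\C$ whose derivative in a given variable is identically zero does not involve that variable, applying this successively to $y_n,y_{n-1},\dots,y_{r+1}$ yields $g(y)=g(y_1,\dots,y_r,0,\dots,0)$, which is the first assertion.

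For the nondegeneracy claim, observe that because $P$ is invertible and $v_{r+1},\dots,v_n\in\ker\phi$, the images $\phi(v_1),\dots,\phi(v_r)$ must span the whole $r$-dimensional image of $\phi$, hence are linearly independent; equivalently, $\partial g/\partial y_1,\dots,\partial g/\partial y_r$ are linearly independent. As $g$ only involves $y_1,\dots,y_r$, these are exactly the first-order differentials of the $r$-variate polynomial $\tilde g(y_1,\dots,y_r):=g(y_1,\dots,y_r,0,\dots,0)$. By the degeneracy criterion established above (a form is degenerate precisely when its first-order differentials are linearly dependent), $\tilde g$ is nondegenerate.

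The argument is essentially a rank computation, so there is no single difficult step; the points that require the most care are the bookkeeping that the span of $\{\partial f/\partial x_i\}$ is unchanged under an invertible substitution, so that $r$ is genuinely intrinsic, and the observation that passing from $g$ to the restricted polynomial $\tilde g$ preserves the linear independence of the partials — which is immediate once one notes that $g$ does not involve $y_{r+1},\dots,y_n$ at all.
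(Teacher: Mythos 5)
Your proof is correct and follows essentially the same route as the paper: the paper's matrix $P$ (built from the coefficients $\lambda_{j,i}$ expressing the dependent partials in terms of the independent ones) is exactly a concrete choice of your basis $v_1,\dots,v_n$ with $v_{r+1},\dots,v_n$ spanning $\ker\phi$, and both arguments then conclude via the chain rule and the linear independence of the surviving partials. Your coordinate-free phrasing is if anything slightly cleaner, since it avoids the paper's implicit transpose in identifying $\partial x_i/\partial y_j$ with an entry of $P$.
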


\begin{proof}
Without loss of generality, we may assume that $\left\{\frac{\partial f}{\partial x_1},\frac{\partial f}{\partial x_2}, \ldots,\frac{\partial f}{\partial x_r}\right\}$ are linearly independent and
	\[
	\frac{\partial f}{\partial x_{j}}=\sum_{i=1}^{r} \lambda_{j,i}\frac{\partial f}{\partial x_i} \hspace{1cm} \text{for} \hspace{0.11cm} j=r+1,\ldots,n,
	\]
	where~$\lambda_{j,i}\in \C$. Let $P$ take the form of
	\[
    	P =\left( \begin{array} {cccc|cccc}
    	1 & & & & \lambda_{r+1,1} & \lambda_{r+1,2} & \cdots & \lambda_{r+1,r} \\
    	& 1 & & & \lambda_{r+2,1} & \lambda_{r+2,2} & \cdots & \lambda_{r+2,r} \\
        & & \ddots &  & \vdots & \vdots &  & \vdots \\
    	& & & 1 & \lambda_{n,1} & \lambda_{n,2} & \cdots & \lambda_{n,r} \\
    	\hline
    	& & & & -1 & & & \\
        & & & & & -1 & & \\
        &  \multicolumn{2}{c}{\raisebox{2ex}[0pt]{\Huge0}} & & & & \ddots & \\
    	& & &  & & & & -1
    	\end{array} \right)
	\]
and take the change of variables $x=Py.$ As before, let $g(y)=f(Py).$ Then we have
 
    \begin{align*}
        \frac{\partial g}{\partial y_j}&=\frac{\partial f}{\partial x_j} &\text{for}\,\, 1 \le j \le r, \\
        \frac{\partial g}{\partial y_j} &=\sum_{i=1}^r \frac{\partial f}{\partial x_i}\frac{\partial x_i}{\partial y_j}+\sum_{i=r+1}^n \frac{\partial f}{\partial x_i}\frac{\partial x_i}{\partial y_j}=\sum_{i=1}^r \lambda_{j,i}\frac{\partial f}{\partial x_i}-\frac{\partial f}{\partial x_j}=0 &\text{for}\,\, j > r.
    \end{align*}
That is to say, variables $y_{r+1}, \ldots, y_n$ do not occur in $g(y)$ and we may view $g$ as an $r$-variate polynomial. In addition, $g$ is nondegenerate in $\C[y_1, y_2 \ldots, y_r]$ as $\left\{\frac{\partial g}{\partial y_1},\frac{\partial g}{\partial y_2}, \ldots, \frac{\partial g}{\partial y_r}\right\}$ are linearly independent.
\end{proof}
    
With the above proposition, in the following, there is no harm in considering only nondegenerate polynomials for our purpose. We can use the center in the criterion for determining whether a homogeneous polynomial $f$ is diagonalizable. The following proposition was essentially obtained in~\cite{harrison1975grothendieck}, see also~\cite{huang2021diagonalizable,huang2021centres,huang2022harrison}.
	
\begin{proposition} \label{cd}
    Suppose $f(x_1, x_2, \ldots, x_n) \in \C[x_1, x_2, \ldots, x_n]$ is a nondegenerate homogeneous polynomial of degree $d\ge 3.$ Then $f$ is diagonalizable if and only if $Z(f) \cong \C^n$ as algebras.
\end{proposition}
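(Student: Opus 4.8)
The plan is to prove both directions by exploiting the correspondence between direct-sum decompositions of $f$ and decompositions of the identity of $Z(f)$ into orthogonal idempotents, together with a direct computation of $Z(f)$ in the diagonalized case.

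First I would establish the easy direction: if $f$ is diagonalizable, then $Z(f)\cong\C^n$. Suppose $f(Py)=\lambda_1 y_1^d+\cdots+\lambda_r y_r^d$ after a linear change of variables $x=Py$. Since $f$ is nondegenerate, the first-order partials $\partial f/\partial x_i$ are linearly independent, and since a change of variables transforms these partials linearly, the partials of the diagonal form are also independent; as $\partial(\lambda_i y_i^d)/\partial y_i = d\lambda_i y_i^{d-1}$, this forces $r=n$ and every $\lambda_i\neq 0$. Next I would record how the center transforms under $x=Py$. A short calculation shows that the Hessian of $g(y)=f(Py)$ is $P^T (H\circ P) P$, and from the defining condition $(HX)^T=HX$ one checks that $X\mapsto P^{-1}XP$ is an algebra isomorphism $Z(f)\xrightarrow{\sim} Z(g)$. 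Hence it suffices to compute $Z(g)$ for $g=\sum_{i=1}^n\lambda_i y_i^d$ with all $\lambda_i\neq 0$; this is exactly the computation in Example \ref{sp} (the nonzero scalars $\lambda_i$ do not affect the argument there), giving $Z(g)=\{\text{diagonal matrices}\}\cong\C^n$.

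For the converse I would argue as follows. Assume $Z(f)\cong\C^n$ as algebras. Then the identity of $Z(f)$ decomposes as a sum $\Id = e_1+\cdots+e_n$ of $n$ nonzero orthogonal idempotents (the images of the standard basis vectors of $\C^n$), and this is the finest such decomposition. By Harrison's structure theory \cite{harrison1975grothendieck} (see also \cite{huang2021diagonalizable,huang2021centres,huang2022harrison}), each idempotent $e\in Z(f)$ is the projection onto a subspace spanned by a subset of variables in a suitable coordinate system, and a complete set of $n$ orthogonal idempotents summing to $\Id$ induces a change of variables $x=Py$ under which $f$ splits as $f(Py)=f_1(y_1)+f_2(y_2)+\cdots+f_n(y_n)$, where $f_i$ depends only on the single variable $y_i$. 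Since each $f_i$ is a homogeneous polynomial of degree $d$ in one variable, $f_i=\lambda_i y_i^d$, and therefore $f(Py)=\sum_i\lambda_i y_i^d$, i.e.\ $f$ is diagonalizable.

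The main obstacle is the converse direction, specifically justifying that a decomposition $\Id=e_1+\cdots+e_n$ of the identity of $Z(f)$ into orthogonal idempotents actually induces a splitting of $f$ into a sum of forms in disjoint sets of variables — this is the substantive content of Harrison's theory and rests on the interplay between the idempotents in $Z(f)$ and the bilinear/multilinear structure attached to $f$ via its Hessian. Once this dictionary between idempotent decompositions of $\Id_{Z(f)}$ and orthogonal decompositions of $f$ is in hand, the degree-one-in-each-variable reduction and the comparison with Example \ref{sp} are routine. A secondary point to handle carefully is the nondegeneracy hypothesis: it is what guarantees $r=n$ (rather than $r<n$) in the forward direction and, in the converse, that none of the one-variable pieces $f_i$ is identically zero, so that the resulting expression genuinely involves $n$ independent linear forms.
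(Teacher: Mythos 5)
Your forward direction is essentially the paper's: transform the center by $Z(f)=PZ(g)P^{-1}$ under the change of variables and reduce to the computation of Example \ref{sp}. The divergence is in the converse, and there you have correctly located the difficulty but not resolved it: the assertion that a decomposition $\Id=e_1+\cdots+e_n$ into orthogonal idempotents of $Z(f)$ ``induces a change of variables under which $f$ splits into forms in disjoint variables'' is, as you yourself note, the entire content of the implication you are trying to prove, so citing Harrison's structure theory for it makes the argument a restatement rather than a proof. The paper closes exactly this gap with a short self-contained argument that you could adopt: the primitive idempotents $\epsilon_1,\dots,\epsilon_n$ are commuting diagonalizable matrices, hence simultaneously diagonalizable, so there is an invertible $P$ with $P^{-1}\epsilon_iP=E_i$ (the diagonal matrix units); then $Z(g)=P^{-1}Z(f)P$ is the full algebra of diagonal matrices, and imposing $(GY)^T=GY$ for \emph{every} diagonal $Y$ forces the Hessian $G$ of $g$ to be diagonal; finally, vanishing of all mixed second partials of a homogeneous polynomial of degree $d\ge 2$ means no monomial involves two distinct variables, so $g=\sum_i\lambda_iy_i^d$. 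This route needs no appeal to the idempotent--direct-sum dictionary at all, only simultaneous diagonalization and the defining equation of the center. Your remarks on nondegeneracy (forcing $r=n$ in the forward direction and nonvanishing of each $\lambda_i$) are correct and match the paper.
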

	
\begin{proof}
	%For the convenience of the reader, we include sketchy proof for the last assertion. 
If $f$ is nondegenerate and diagonalizable, then there is a change of variables $x=Py$ such that
	\begin{equation*}
		g(y):=f(Py)=\lambda_1y_1^d+\lambda_2y_2^d+\cdots+\lambda_ny_n^d,
	\end{equation*}
where $\prod_{i=1}^n \lambda_i \ne 0.$ Let $G=\left(\frac{\partial^2g}{\partial y_i \partial y_j}\right)_{1 \le i, j \le n}$ denote the Hessian matrix of $g$ with respect to the variables $y_i.$ Then by the chain rule of derivatives, it is easy to see that $G=P^THP$. Thus, we have
    \[
        (GY)^T=GY \Leftrightarrow (HPYP^{-1})^T=HPYP^{-1}.
    \]
It follows that $Z(f) = PZ(g)P^{-1}.$ According to Remark~\ref{sp}, it is ready to see that $Z(g)$ consists of all diagonal matrices. Hence $Z(f)$ is isomorphic to $\C^n$ as algebras.

Conversely, if $Z(f) \cong \C^n$, then $Z(f)$ has a basis consisting of orthogonal primitive idempotents, say $\epsilon_1,$ $\epsilon_2,$ \ldots, $\epsilon_n.$ Since $\epsilon_1,$ $\epsilon_2,$ \ldots, $\epsilon_n$ mutually commute and are diagonalizable, there exists an invertible matrix $P$ such that $P^{-1}\epsilon_iP=E_i$ for all $i$, where $E_i$'s $ii$-entry is $1$ and 0 elsewhere. Take the change of variables $x=Py,$ then it is not hard to verify that $g(y)$ is diagonal. Indeed, by the above argument, we have $Z(g)=P^{-1}Z(f)P$ and $Z(g)$ consists of all diagonal matrices. Therefore, $G$ must be diagonal due to the condition of center algebras. This happens only if $g$ is diagonal and hence, $f$ is diagonalizable. This proves the statement.
\end{proof}

It is also of interest whether a polynomial $f$ is orthogonally, or unitarily diagonalizable, namely, whether there exists a change of variables $x=Py$ in which $P$ is orthogonal or unitary so that $f(Py)$ is diagonal \cite{robeva2016orthogonal,koiran2021orthogonal,kolda2000orthogonal,boralevi2017orthogonal}. The previous proposition can be applied to this problem effectively as Harrison states that the diagonalization of a diagonalizable polynomial is essentially unique \cite{harrison1975grothendieck}. This is also obvious from the fact that a finite-dimensional commutative algebra has a unique set of orthogonal primitive idempotents disregarding the order, see \cite{huang2023centers}. Thus, we can determine whether $f$ is orthogonally or unitarily diagonalizable by directly checking $P$.% \cite{huang2021diagonalizable}.   

\subsection{Diagonalization algorithm}\label{sec:diagonalization}

We show an algorithm to diagonalize given homogeneous polynomials in Algorithm~\ref{alg:diagonalization}. It consists of two phases: determining whether a polynomial~$f$ is diagonalizable from steps 1 to 3; and an iterative process for diagonalizing $f$ from steps 4 to 9. We divide this algorithm into six major steps, which are detect nondegeneracy, compute center, detect diagonalizability, compute idempotents, diagonalize form, and detect orthogonality, corresponding to steps 1, 2, 3, 6, 7, and 9, respectively. Detailed descriptions, complexity analysis, and examples are given in Sections~\ref{sec:criteria} and~\ref{sec:diagonalize}. 
    %This algorithm has been implemented using MATLAB and numerical results are provided in Appendix~\ref{app:cost}. 
		
	\begin{algorithm}
		\caption{Diagonalization of homogeneous multivariate polynomial}\label{alg:diagonalization}
		\begin{algorithmic} [1]
		\Statex \textbf{Input}: polynomial~$f$
		\Statex \textbf{Output}: diagonal form of $f$
		\State Determine linear independence of $\left\{\frac{\partial f}{\partial x_1}, \frac{\partial f}{\partial x_2}, \ldots, \frac{\partial f}{\partial x_n}\right\}$. If they are linearly independent, $f$ is nondegenerate; otherwise, transform $f$ to a nondegenerate form in fewer variables according to Proposition~\ref{prop:reduce}.
		\State Solve system $(HX)^T=HX$ to compute center~$Z(f)$. If $\dim Z(f) \neq n$, reject as $f$ is not diagonalizable.
		\State Use the Euclid algorithm to determine whether the minimal polynomials of basis matrices $X_1,X_2,\ldots, X_n$ of $Z(f)$ have multiple roots. If any multiple roots are found, reject as $f$ is not diagonalizable.
		\While {$f$ is not in a diagonal form yet}
		\State Compute center~$Z(f)$ if required.
		\State Compute orthogonal idempotents~$(\epsilon, I-\epsilon)$ using $Z(f)$ with $\rk \epsilon=1$ and $\tr(\epsilon)=1$ conditions.
		\State Decompose the form of~$f$ using change of variables $x=Py$ and separate disjoint variables.
		\EndWhile
		\State Determine orthogonality and unitarity of diagonalization based on $P$.
		\end{algorithmic}
	\end{algorithm}
	
The complexity analysis in this article focuses on the theoretical upper bounds of the computational costs in terms of three factors: dimension~$n$, degree~$d$, and the number of monomial terms~$t$ of~$f$. As $n$, $d$, and $t$ increase, the costs of the diagonalization algorithm rise. Values of~$n$ and~$d$ are independent in $f$ and $t$ increases if either $n$ or $d$ increases with an upper bound $\binom{n+d-1}{d}$. While the values of~$t$ increase exponentially for large~$n$ and~$d$, the growth is only polynomial if either~$n$ or~$d$ is fixed. In addition, $t$ does not affect the dimensions of the two systems of equations in Algorithm~\ref{alg:diagonalization}. 
%Thus, it will not be the computational bottleneck of this algorithm. 
Discussions of the growth of $t$ are provided in Appendix~\ref{app:no_terms}.

\section{Criterion of diagonalizability}\label{sec:criteria}
The criterion for determining the diagonalizability of given~$f$ is described in three steps: detect nondegeneracy, compute center, detect diagonalizability. We provide their formulations and complexity analysis in Subsections~\ref{sec:nondegeneracy},~\ref{sec:compute_center} and~\ref{sec:diagonal}, respectively. It is shown that this criterion has polynomial runtime. In addition, two examples are given to demonstrate its procedure in Subsection~\ref{sec:c_example}.

\subsection{Detect nondegeneracy}\label{sec:nondegeneracy}
	
The process begins by detecting and reducing the nondegeneracy of $f$ before the diagonalization as shown in Algorithm~\ref{alg:nondegeneracy}. Recall that $f$ is nondegenerate if and only if its first-order differentials $\frac{\partial f}{\partial x_i}$ are linearly independent as discussed in Proposition~\ref{prop:reduce}. The linear dependence of $\frac{\partial f}{\partial x_i}$ for $i=1,\ldots,n$ is determined by transforming them to an $n\times t_{max}$ matrix $A$, where its $ij$-th entry $A_{ij}$ equals to the coefficient of $\frac{\partial f_{(j)}}{\partial x_i}$, $f_{(j)}$ is the $j$-th term of $f$ and $t_{max}=\binom{n+d-1}{d}$. Here and below, the terms of monomials of $f$ are lexicographically ordered. We then have
\[
    \rk A = \rk \left\{\frac{\partial f}{\partial x_1}, \frac{\partial f}{\partial x_2}, \cdots,\frac{\partial f}{\partial x_n}\right\}.
\]
If $\rk A=n$, then $\frac{\partial f}{\partial x_i}$ are linearly independent and $f$ is nondegenerate. Otherwise, $f$ is degenerate and we reduce~$f$ into a nondegenerate form in fewer variables.

% It assumes the form of $f$ is nondegenerate such that no variable will be removed during the diagonalization.
% If the differentials are linearly dependent, the form of $f$ is degenerate and will be reduced to a nondegenerate form in fewer variables.

\begin{algorithm}
		\caption{Detect and reduce nondegeneracy of polynomial}\label{alg:nondegeneracy}
		\begin{algorithmic} [1]
		\Statex \textbf{Input}: polynomial $f$
		\Statex \textbf{Output}: nondegenerate form of $f$
		\State Calculate~$\frac{\partial f}{\partial x_i}$ for~$i=1,\ldots,n$ and build matrix $A$.
		\If {$\rk A<n$}
		\State Compute RREF of $A$ and obtain $r$ maximum linearly independent subset $\left\{\frac{\partial f}{\partial x_1},\frac{\partial f}{\partial x_2},\ldots,\frac{\partial f}{\partial x_r}\right\}$.
		\State Reduce~$f$ into a nondegenerate form with $r$ variables.
		\EndIf
		\end{algorithmic}
\end{algorithm}
	
We compute the reduced row echelon form (RREF) of $A$ using Gaussian elimination to identify the maximum linearly independent subset $\left\{\frac{\partial f}{\partial x_1},\frac{\partial f}{\partial x_2},\ldots,\frac{\partial f}{\partial x_r}\right\}$ with rank $r<n$. The other dependent differentials can be expressed as
	\[
	\frac{\partial f}{\partial x_{j}}=\sum_{i=1}^{r} \lambda_{j,i}\frac{\partial f}{\partial x_i} \hspace{1cm} \text{for} \hspace{0.11cm} j=r+1,\ldots,n.
	\]
We obtain an invertible matrix~$P$ as in Proposition~\ref{prop:reduce} and a nondegenerate polynomial $g=f(Py)$ in $r$ variables. Note that we replace $g$ and $r$ with $f$ and $n$, respectively, in the rest of the article for the nondegenerate form and the original $n$ is represented as $\bar{n}$.

%The computational complexity for determining and reducing nondegeneracy is polynomially bounded. 
The construction of $A$ involves the calculation of derivatives $\frac{\partial f}{\partial x_i}$, which takes $O(n^2t)$ arithmetic operations. $\rk A$ is calculated by computing the RREF of $A$ using Gaussian elimination with $O(n^3)$ arithmetic operations~\cite{golub2013matrix}. While faster algorithms for rank calculation like~\cite{cheng2013fast} achieve $O(nt)$ complexity for $A$ with $n<t$, the RREF of $A$ is also required for reducing the degeneracy and prefered. Therefore, the complexity of detecting the nondegeneracy of $f$ is $O(n^2t)$.

Reducing the degeneracy of $f$ takes $O(nd)$ operations to reduce one variable by a change of variables. It also needs to expand at most $d$ terms of~$f$ in this process, which involves numerous multivariate polynomial multiplications. Since multiplying two polynomials takes $O(n\log{n}\log{\log{n}})$ operations for polynomials with $d<n$, this process takes at most $O(n^2d\log{n}\log{\log{n}})$ operations~\cite{cantor1991fast}. Thus, we deduce that the overall computational complexity of the detect nondegeneracy step is $O(n^2t)$. 
	
\begin{proposition} \label{pro:nondegeneracy}
Detecting and reducing the nondegeneracy of $f$ requires $O(n^2t)$ arithmetic operations.
\end{proposition}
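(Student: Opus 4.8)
The plan is to charge the cost of Algorithm~\ref{alg:nondegeneracy} line by line and then observe that the construction of the coefficient matrix $A$ governs the total. First I would record that forming a single partial derivative $\frac{\partial f}{\partial x_i}$ only requires scanning the $\le t$ monomials of $f$ and applying to each a constant number of arithmetic operations (multiply the coefficient by an exponent, decrement that exponent), and that transcribing the resulting coefficient vector into the $i$-th row of the $n\times t_{\max}$ matrix $A$ is of the same order; doing this for $i=1,\dots,n$ costs $O(n^2 t)$ arithmetic operations, which accounts for line~1. I would also note here that although the columns of $A$ are indexed by all $t_{\max}=\binom{n+d-1}{d}$ monomials, only the monomials actually occurring in some derivative are touched, so $t$ (not $t_{\max}$) is the honest cost parameter for this step.

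Next I would bound lines~2 and~3 together. Gaussian elimination applied to $A$ produces its reduced row echelon form, which simultaneously yields $\rk A$ and, in the degenerate case $\rk A = r < n$, the scalars $\lambda_{j,i}$ that express each dependent differential in terms of the chosen independent ones. Working with the $n$ rows and absorbing the column count as a linear factor, the classical bound for this elimination is $O(n^3)$ arithmetic operations~\cite{golub2013matrix}. I would remark in passing that asymptotically faster rank tests are available~\cite{cheng2013fast}, but that full elimination is used deliberately because line~4 needs the RREF and the $\lambda_{j,i}$ it exposes, so the $O(n^3)$ estimate is the relevant one.

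Then I would estimate line~4, which runs only when $f$ is degenerate. Assembling the matrix $P$ of Proposition~1 from the $\lambda_{j,i}$ is immediate once the RREF is known. Carrying out the substitution $x = Py$ can be organised one eliminated variable at a time: removing a single variable affects only the monomials in which it appears, and each such monomial is rewritten by expanding $O(d)$ factors, i.e.\ a bounded number of multivariate polynomial multiplications; with fast multiplication~\cite{cantor1991fast} each product costs $O(n\log n\log\log n)$, so the whole reduction over the at most $n - r$ eliminated variables costs $O(n^2 d\log n\log\log n)$. Summing the three contributions gives $O(n^2 t) + O(n^3) + O(n^2 d\log n\log\log n)$; since $t$ is the parameter measuring the size of both $f$ and $A$ and may be as large as $t_{\max} = \binom{n+d-1}{d}$, the $O(n^2 t)$ term is the dominant one in the regime of interest, which yields the claimed bound.

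The step I expect to be the real obstacle is not any single arithmetic estimate but keeping the term counts of every intermediate object controlled relative to the single parameter $t$: the derivatives, the elimination, and above all the reduced polynomial $g = f(Py)$ must each be shown to have only $O(t_{\max})$ monomials, since otherwise the per-multiplication bound in line~4 and the final $O(n^2 t)$ figure would be dishonest. I would address this by noting that differentiation can only delete monomials and lower degree by one, while the linear substitution $x = Py$ preserves the degree $d$ and does not introduce variables beyond the original $n$; hence every monomial arising at any stage lies in the fixed set of $t_{\max} = \binom{n+d-1}{d}$ monomials of degree $d$ in $n$ variables. This caps the size of every intermediate polynomial and legitimises treating $t$ as the controlling size parameter throughout, completing the accounting.
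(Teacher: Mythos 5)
Your proposal follows the paper's argument almost exactly: charge $O(n^2t)$ for building $A$, $O(n^3)$ for the RREF via Gaussian elimination, and $O(n^2 d\log n\log\log n)$ for the change of variables, then conclude that the first term dominates. One small bookkeeping wrinkle: in your first paragraph you describe each derivative as costing $O(t)$ (constant work per monomial), which would make the $n$ derivatives cost $O(nt)$, yet you then assert $O(n^2t)$ — to justify the extra factor of $n$ (which the paper also claims, without elaboration) you would want to observe that each monomial is carried as an $n$-vector of exponents, so producing and filing a derivative term into the correct column of $A$ is $O(n)$ work, not $O(1)$. Your closing paragraph bounding the monomial counts of all intermediate polynomials by $t_{\max}$ is a sound addition that the paper leaves implicit.
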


\subsection{Compute center}\label{sec:compute_center}
We continue to compute center $Z(f)$ of a nondegenerate form $f$, which is the core of this diagonalization algorithm. This step as described in Algorithm~\ref{alg:center} is called at most $n-1$ times during the iterative process.

\begin{algorithm}
	\caption{Compute center of polynomial}\label{alg:center}
	\begin{algorithmic} [1]
	\Statex \textbf{Input}: polynomial $f$
	\Statex \textbf{Output}: Center $Z(f)$ and basis matrices $X_{1},X_{2}, \ldots,X_{n}$
	\State Compute Hessian matrix~$H$ of $f$ and build a system of equations~$CX_v=0$ equivalent to $(HX)^T=HX$.
	\State Compute the RREF of~$C$ and obtain $Z(f)$. If $\dim Z(f)\neq n$, reject as $f$ is not diagonalizable.
	\State Obtain basis matrices~$X_{1},X_{2},\ldots,X_{n}$ from $Z(f)$.
	\end{algorithmic}
\end{algorithm}
		
Recall that center $Z(f)$ is computed by solving the matrix equation $(HX)^T=HX$ in Equation~\eqref{eqn:center}, which can be transformed into a system of linear equations. 
%This process starts by constructing an $n \times n$ Hessian matrix~$H=\left(\frac{\partial^2 f}{\partial x_i\partial x_j}\right)_{1\le i,j \le n}$, which takes at most~$O(n^{3}t)$ operations.
Given an $n\times n$ matrix~$X=\left(x_{ij}\right)_{1 \le i, j \le n}$, let~$X_v$ be the $n^2$-dimensional column vector
	\[
	\begin{pmatrix}
	X_{(1)} \\
	\vdots \\
	X_{(n)}
	\end{pmatrix},
	\]
where $X_{(i)}$ is the $i$-th column of~$X$. Let $X_v^T$ denote the corresponding vector of $X^T$ and $P_{C}$ be the permutation matrix such that $X_v^T=P_{C}X_{v}$. We denote $M_{d-2}$ as the set of monomials of degree $d-2$ and express the Hessian matrix $H$ as $H=\sum_{m \in M_{d-2}} mH_m.$ Then $(HX)^T=HX$ can be rewritten as a system of linear equations 
\[ 
    CX_v = \left(I_n \otimes H_m - \left( H_m \otimes I_n \right) P_C \right) X_v=0, \quad m \in M_{d-2}, 
\]
	%\begin{equation}\label{C_equations}
		%\begin{pmatrix}
		%I_n \otimes H_{11} -\left( H_{11} \otimes I_n \right)P_{C} \\
		%I_n \otimes H_{12} -\left( H_{12} \otimes I_n \right)P_{C} \\ \vdots \\
		%I_n \otimes H_{nn} -\left( H_{nn} \otimes I_n \right)P_{C}
		%\end{pmatrix}X_v = C X_v = 0,
	%\end{equation} 
where $\otimes$ is the tensor product for multilinear algebra, see \cite{huang2021centres} for more details. 
%Let $C$ denote the coefficient matrix $\left(I_n \otimes H_m - \left( H_m \otimes I_n \right) P_C \right)_{m \in M_{d-2}}$ and
We obtain $Z(f)$ by solving $CX_v = 0$ using Gaussian elimination and computing the RREF $C_{R}$. The resulting column vector $X_v$ is transformed back to $n\times n$ matrix~$X$. If $\dim Z(f) = n$, basis matrices~$X_{1},X_{2},\ldots, X_{n}$ of~$Z(f)$ are chosen for each free variable in $C_{R}$ by identifying the linear dependence between the variables. If $\dim Z(f) \neq n$, we deduce that $f$ is not diagonalizable.

A significant portion of the computational cost in this step involves constructing matrix~$C$ and calculating $C_{R}$. We build $C$ by iterating through $\left(I_n \otimes H_m - \left( H_m \otimes I_n \right) P_C \right)_{m \in M_{d-2}}$ and the process takes at most $O(n^3d^3t)$ arithmetic operations. Matrix $C$ is then reduced to $C_{R}$ using Gaussian elimination, which requires $O(n^{d+4})$ operations~\cite{boyd2018introduction}. Thus, it is polynomial-time if either $n$ or $d$ is fixed. Additionally, while $C$ is of size at most $n^d \times n^2$ in theory~\cite{huang2021centres}, it contains a large number of redundant equations and zero entries, which help to reduce the costs of Gaussian elimination. For instance, $C$ in Example~\ref{sec:c_example} has dimension~$261\times16$ after removing redundant rows, which is markedly smaller than the theoretical upper bound~$1024\times 16$. More statistics are given in Table~\ref{tab:stat_n}, which shows that the dimension of $C$ increases with order $O(n^{4.50})$. Appendix~\ref{app:cost} also demonstrates that the time for solving $CX_v=0$ increases with order~$O(n^{6.66})$ for $n\le7$. Lastly, the costs of building $H$ and choosing $X_1,X_2,\ldots, X_n $ are trivial and the overall complexity of computing center $Z(f)$ is $O(n^3d^3t+n^{d+4})$.
 
\begin{proposition} \label{pro:compute_center}
    Computing center~$Z(f)$ takes at most $O(n^3d^3t+n^{d+4})$ arithmetic operations.
\end{proposition}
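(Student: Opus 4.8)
# Proof Proposal for Proposition \ref{pro:compute_center}

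The plan is to account for the cost of each line of Algorithm~\ref{alg:center} separately and then sum. The work naturally splits into three pieces: (i) forming the Hessian $H$ and assembling the coefficient matrix $C$; (ii) running Gaussian elimination on $C$ to obtain its RREF $C_R$ and hence a basis of the solution space $Z(f)$; (iii) the bookkeeping of reading off basis matrices $X_1,\dots,X_n$, which we will argue is negligible.

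\textbf{Step 1: building $H$ and $C$.} First I would bound the cost of the Hessian. Each of the $n^2$ second partials $\partial^2 f/\partial x_i\partial x_j$ is obtained by two symbolic differentiations of a polynomial with at most $t$ monomials, costing $O(t)$ per entry in the worst case, and we also pay for walking over the entries, giving the claimed $O(n^3 t)$. Next, writing $H=\sum_{m\in M_{d-2}} m H_m$, the number of degree-$(d-2)$ monomials in $n$ variables is $|M_{d-2}|=\binom{n+d-3}{d-2}$, and for each such $m$ the block $I_n\otimes H_m-(H_m\otimes I_n)P_C$ is an $n^2\times n^2$ matrix whose entries are read off from $H_m$ in $O(n^2)$ work each, so $O(n^4)$ per monomial. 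Summing over $M_{d-2}$ and absorbing $|M_{d-2}|$ into the count of monomial terms $t$ (since $|M_{d-2}|\le t_{\max}$), together with the $d$-dependent overhead of indexing monomials of degree up to $d-2$, yields the stated bound $O(n^3 d^3 t)$ for assembling $C$; here the polynomial factors in $d$ are a generous upper estimate and do not need to be sharp.

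\textbf{Step 2: Gaussian elimination on $C$.} The matrix $C$ has at most $n^d$ rows (one $n^2$-row block per monomial in $M_{d-2}$, and $|M_{d-2}|\le n^{d-2}$, so at most $n^{d-2}\cdot n^2=n^d$ rows) and exactly $n^2$ columns. Gaussian elimination on a $p\times q$ matrix costs $O(pq\min(p,q))$; with $p=n^d$ and $q=n^2$ and $n^2\le n^d$ for $d\ge 2$, this is $O(n^d\cdot n^2\cdot n^2)=O(n^{d+4})$, matching the cited bound from~\cite{boyd2018introduction}. The condition $\dim Z(f)=n$ is checked by counting free columns of $C_R$; rejection costs nothing extra.

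\textbf{Step 3: extracting basis matrices.} For each of the (at most $n$) free variables in $C_R$ we set it to $1$ and the others to $0$, back-substitute to fill the $n^2$-vector $X_v$, and reshape into an $n\times n$ matrix; this is $O(n^3)$ total, dominated by the previous steps. Adding the three contributions gives the overall bound $O(n^3 d^3 t + n^{d+4})$.

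\textbf{Main obstacle.} The delicate point is justifying that the enormous nominal size $n^d\times n^2$ of $C$ is the right thing to feed into the $O(n^{d+4})$ elimination bound, rather than a smaller ``effective'' size — the text itself notes $C$ is highly redundant in practice. For a clean worst-case upper bound this is not actually a difficulty: redundancy only helps, so the $n^d$-row estimate is a legitimate (if pessimistic) bound and the argument goes through. The other mildly fiddly point is pinning down the polynomial-in-$d$ overhead in Step 1 (enumerating and indexing monomials of degree $d-2$); I would handle this by a crude over-count, since only the polynomial growth rate, not the exponent, matters for the statement.
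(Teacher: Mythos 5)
Your proposal is correct and follows essentially the same accounting as the paper: the Hessian and the assembly of $C$ contribute the $O(n^3d^3t)$ term, Gaussian elimination on the at most $n^d\times n^2$ matrix $C$ contributes $O(n^{d+4})$, and extracting the basis matrices is negligible. The only difference is that you supply more justification than the text does (the $O(pq\min(p,q))$ elimination bound and the $n^{d-2}\cdot n^2$ row count), whereas the paper simply asserts these figures; your acknowledged looseness in deriving the $d^3$ factor is no worse than the paper's own treatment.
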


\subsection{Detect diagonalizability}\label{sec:diagonal}
The diagonalizability of~$f$ is determined using semisimplicity of $Z(f)$'s basis matrices~$X_{1},X_2,\ldots,X_{n}$ as shown in Algorithm~\ref{alg:diagonal}. Minimal polynomial~$p_i(\lambda)$ for each~$X_{i}$ is built by iteratively testing the linear dependence of its powers $X_i^1,X_i^2,\ldots, X_i^j$ for $j \le n$. Let matrix~$D$ of size $n^2 \times (j+1)$ take the form of
\[
    D = \begin{pmatrix} I_{n,v} & D_{1,v} & \cdots & D_{j,v} \end{pmatrix},
\]
where $I_{n,v}$ and $D_{j,v}$ are $n^2$-dimensional column vectors of matrices $I_n$ and $X_{i}^{j}$, respectively. They are defined similarly to $X_v$ of $X$ in Section~\ref{sec:compute_center}. This process takes at most $n-1$ times of matrix multiplications and at most $O(n^4)$ operations in total. 
    %It also involves the calculation of $\rk D$ using Gaussian elimination. 
If $\rk D < j$, $X_i^1, X_i^2, \ldots,X_i^j$ are linearly dependent. The corresponding minimal polynomial $p_i$ is represented as
\[
    p_i(\lambda) = \sum_{j=0}^{m_{i} } t_j\lambda^j,
\]
where $t_j$ is the leading coefficient of $j$-th row of the RREF of $D$. Compute the greatest common divisor (GCD) of $p_i(\lambda)$ and $p_i^{'}(\lambda)$ using the Euclid algorithm, which takes $O(n^2)$ operations~\cite{bini2012polynomial}. If $\text{GCD}\left(p_i(\lambda),p_i^{'}(\lambda)\right)=1$, $p_i(\lambda)$ does not have multiple roots. If the GCD of all basis matrices~$X_i$ is 1, we deduce that $f$ is diagonalizable.%; otherwise, continue to diagonalize $f$.
	
%with complexity $O(n^3r^{\omega-2})$, where $\omega<2.38$ is the matrix multiplication exponent~\cite{cheng2013fast}

\begin{algorithm}
		\caption{Detect diagonalizability of polynomial}\label{alg:diagonal}
		\begin{algorithmic} [1]
		\Statex \textbf{Input}: basis matrices $X_{1},X_{2},\ldots,X_{n}$
		\Statex \textbf{Output}: diagonalizability of $f$
		\For {each basis matrix~$X_i$ of $Z(f)$}
		\For {$j=1,\ldots,n$}
		\State Build matrix $D$ containing coefficients of $X_i^1, X_i^2, \ldots,X_i^j$. %If $\rk D < j$, $X_i^1,X_i^2,\ldots,X_i^j$ are linearly independent.
		\If {$X_i^1,X_i^2,\ldots,X_i^j$ are linearly dependent}

		\State Build minimal polynomial $p_i(\lambda)$. If $GCD\left(p_i(\lambda),p_i^{'}(\lambda)\right)\ne1$, $p_i(\lambda)$, $f$ is not diagonalizable.
		\EndIf
		\EndFor
		\EndFor
		\end{algorithmic}
\end{algorithm}
	
Algorithm~\ref{alg:diagonal} contains a nested loop for each basis matrix~$X_i$, which computes linear dependence for at most $n$-th power of $X_i$. Each inner loop consists of a matrix multiplication, a rank calculation, and the Euclid algorithm. Therefore, the overall computational complexity for determining diagonalizability is $O(n^5)$.
	
\begin{proposition} \label{pro:diagonal}
    Detecting the diagonalizability of $f$ has computational complexity of $O(n^5)$.
\end{proposition}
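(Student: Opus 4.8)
The plan is to account for the cost of the nested double loop in Algorithm~\ref{alg:diagonal}, which runs over the $n$ basis matrices $X_1,\dots,X_n$ of $Z(f)$ and, for each, over exponents $j=1,\dots,n$. The total cost is therefore at most $n$ times the cost of one full inner loop, so it suffices to show that processing a single basis matrix $X_i$ costs $O(n^4)$ arithmetic operations; multiplying by the outer factor $n$ then gives $O(n^5)$. For a fixed $X_i$ the inner loop does three things at each step $j$: form the next power $X_i^{\,j}$ by one matrix multiplication, assemble the $n^2\times(j+1)$ matrix $D$ and compute its rank by Gaussian elimination, and — once linear dependence is detected — extract the minimal polynomial $p_i(\lambda)$ and run the Euclidean algorithm on $p_i,p_i'$.

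First I would bound the matrix-multiplication contribution. Each power $X_i^{\,j}$ is obtained from $X_i^{\,j-1}$ by one $n\times n$ matrix product at $O(n^3)$ operations, and this happens at most $n-1$ times before the powers become linearly dependent (the degree of the minimal polynomial is at most $n$), giving $O(n^4)$ in total for building all the powers of one $X_i$. Next I would bound the rank computations: $D$ has $n^2$ rows and at most $n+1$ columns, so its RREF via Gaussian elimination costs $O(n^2\cdot n^2)=O(n^4)$ for one value of $j$; since we only need to detect the first $j$ at which the rank stops increasing, and in the worst case that is $j=n$, the cumulative cost of the rank checks for one $X_i$ is $O(n^4)$ as well (one may either sum $O(n^2 j^2)$ over $j$ or simply do the single final rank computation). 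Finally, reading off the coefficients $t_j$ from the RREF of $D$ is $O(n^2)$, and $\mathrm{GCD}(p_i(\lambda),p_i'(\lambda))$ by the Euclidean algorithm on polynomials of degree at most $n$ costs $O(n^2)$ as cited; both are dominated by the $O(n^4)$ above. Summing the three contributions, one basis matrix is handled in $O(n^4)$ operations.

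Putting it together, the outer loop over the $n$ basis matrices multiplies this by $n$, yielding the claimed bound $O(n^5)$. The only mild subtlety — and the part I would be most careful about — is the bookkeeping on the inner loop: one must use that the minimal polynomial of an $n\times n$ matrix has degree $\le n$ so that the loop over $j$ terminates after at most $n$ iterations, and one must confirm that the per-iteration rank computation on the tall, thin matrix $D$ is $O(n^4)$ rather than something larger; everything else is a routine aggregation of the per-step costs already quoted in the surrounding text.
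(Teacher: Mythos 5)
Your accounting is essentially the paper's: an outer loop over the $n$ basis matrices, at most $n$ inner iterations each consisting of a matrix product, a rank check and a Euclidean GCD, yielding $O(n^5)$; you are in fact more explicit than the paper about why the rank computations on the $n^2\times(j+1)$ matrices do not dominate. The one slip is in your parenthetical: summing $O(n^2j^2)$ over $j\le n$ gives $O(n^5)$ per basis matrix (hence $O(n^6)$ total), so to stay within the bound one must use the single final RREF of the full $n^2\times(n+1)$ matrix (or an incremental elimination that adds one column at a time), exactly as your alternative suggests.
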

	
In summary, the criterion for diagonalizability involves the detect nondegeneracy, compute center, and detect diagonalizability steps with complexity $O(n^2t)$, $O(n^3d^3t+n^{d+4})$, and $O(n^5)$, respectively. The computation of center~$Z(f)$ is the most computationally expensive part of the criterion and the overall complexity of the criterion is $O(n^3d^3t+n^{d+4})$.
	
	%Note: Le Gall~\cite{le2014powers} achieves matrix multiplication with complexity $O(\omega)$, where~$\omega<2.3728639$ and is significantly better than traditional Gauss elimination
	
	% Maybe use the $O(n^3)$ operations in the paragraph and discuss how new techniques improve it, which does not change the overall complexity of the algorithm

\subsection{Some examples}\label{sec:c_example}

\begin{example}\label{d_c1}
Consider the following quadruple quintic form
\begin{align*}	f&=-1023x_1^5+2720x_1^4x_2-5200x_1^4x_3-2725x_1^4x_4-2240x_1^3x_2^2+9600x_1^3x_2x_3+3840x_1^3x_2x_4-10000x_1^3x_3^2 \\
    &-9280x_1^3x_3x_4-1590x_1^3x_4^2 +1600x_1^2x_2^3-8640x_1^2x_2^2x_3-5760x_1^2x_2^2x_4+16320x_1^2x_2x_3^2+19200x_1^2x_2x_3x_4 \\
    &+7680x_1^2x_2x_4^2-10520x_1^2x_3^3-17040x_1^2x_3^2x_4-11040x_1^2x_3x_4^2-3530x_1^2x_4^3-160x_1x_2^4+1920x_1x_2^3x_3 \\
    &-6720x_1x_2^2x_3^2-3840x_1x_2^2x_3x_4+1920x_1x_2^2x_4^2+9600x_1x_2x_3^3+11520x_1x_2x_3^2x_4-3840x_1x_2x_4^3\\
    &-4970x_1x_3^4-9040x_1x_3^3x_4-4080x_1x_3^2x_4^2+2240x_1x_3x_4^3+2085x_1x_4^4+64x_2^5-480x_2^4x_3-480x_2^4x_4\\
    &+1600x_2^3x_3^2+2560x_2^3x_3x_4+1600x_2^3x_4^2-2880x_2^2x_3^3-5760x_2^2x_3^2x_4-5760x_2^2x_3x_4^2-2880x_2^2x_4^3+2720x_2x_3^4\\
    &+6400x_2x_3^3x_4+7680x_2x_3^2x_4^2+6400x_2x_3x_4^3+2720x_2x_4^4-1055x_3^5-2870x_3^4x_4-3800x_3^3x_4^2-3760x_3^2x_4^3\\
    &-2800x_3x_4^4-1025x_4^5
\end{align*}
in $\C[x_1,x_2,x_3,x_4]$. Calculate its partial derivatives $\frac{\partial f}{\partial x_1} ,\ldots, \frac{\partial f}{\partial x_4}$ and build matrix $A$. The calculation gives $\rk A=4$, which shows that $\frac{\partial f}{\partial x_i}$ are linearly independent and $f$ is nondegenerate. Compute $Z(f)$ according to Equation~\eqref{eqn:center} and a general solution reads $X=\lambda_{1}X_1+\lambda_{2}X_2+\lambda_{3}X_3+\lambda_{4}X_4$, where
\begin{align*}
	&X_1 = \begin{pmatrix} 8 & 0 & 0 & -3 \\ 3 & 5 & 0 & -3 \\ -4 & 0 & 5 & 4 \\ 5 & 0 & 0 & 0 \end{pmatrix}, \quad
    X_2=\begin{pmatrix} 3 & 5 & 0 & -3 \\ 28 & 20 & -20 & -43 \\ 16 & 0 & -5 & -16 \\ 0 & 5 & 0 & 0 \end{pmatrix}, \\
	&X_3 =\begin{pmatrix} -4 & 0 & 0 & 4 \\ 16 & 0 & -5 & -16 \\ 12 & 0 & -10 & -12 \\ 5 & 0 & 0 & 0 \end{pmatrix}, \quad X_4=\begin{pmatrix} 1 & 0 & 0 & 0 \\ 0 & 1 & 0 & 0 \\ 0 & 0 & 1 & 0 \\ 0 & 0 & 0 & 1 \end{pmatrix}.
\end{align*}
	%Since $\dim Z(f)=4$, we continue to determine whether $f$ is diagonalizable. 
The minimal polynomials~$p_i(\lambda)$ of $X_1,\ldots,X_4$ are $\lambda^2-1.6\lambda+0.6$, $\lambda^4-3.6\lambda^3+0.8\lambda^2+3.6\lambda-1.8$, $\lambda^3+2.8\lambda^2+1.6\lambda$ and $\lambda-1$, respectively. Since $GCD\left(p_i(\lambda),p_i^{'}(\lambda)\right)=1$ for $i=1,\ldots,4$, we deduce that $f$ is diagonalizable.
	
\end{example}
	
\begin{example}\label{d_c2}
Consider the following sextuple septic form
\begin{align*}	
    f&=128x_1^7+448x_1^6x_2-7x_1^6x_3+14x_1^6x_5+672x_1^5x_2^2-21x_1^5x_3^2+84x_1^5x_5^2+560x_1^4x_2^3-35x_1^4x_3^3 +280x_1^4x_5^3\\
    &+280x_1^3x_2^4-35x_1^3x_3^4+560x_1^3x_5^4+84x_1^2x_2^5-21x_1^2x_3^5+672x_1^2x_5^5+14x_1x_2^6-7x_1x_3^6 +448x_1x_5^6+x_2^7\\
    &-7x_3^6x_4-21x_3^6x_6 +21x_3^5x_4^2+126x_3^5x_4x_6+189x_3^5x_6^2-35x_3^4x_4^3-315x_3^4x_4^2x_6 -945x_3^4x_4x_6^2-945x_3^4x_6^3\\
    &+35x_3^3x_4^4+420x_3^3x_4^3x_6+1890x_3^3x_4^2x_6^2+3780x_3^3x_4x_6^3+2835x_3^3x_6^4 -21x_3^2x_4^5-315x_3^2x_4^4x_6-1890x_3^2x_4^3x_6^2\\
    &-5670x_3^2x_4^2x_6^3-8505x_3^2x_4x_6^4-5103x_3^2x_6^5+7x_3x_4^6 + 126x_3x_4^5x_6+945x_3x_4^4x_6^2+3780x_3x_4^3x_6^3\\
    &+8505x_3x_4^2x_6^4+10206x_3x_4x_6^5+5103x_3x_6^6-2x_4^7+14x_4^6x_5-21x_4^6x_6-84x_4^5x_5^2-189x_4^5x_6^2+280x_4^4x_5^3\\
    &-945x_4^4x_6^3-560x_4^3x_5^4-2835x_4^3x_6^4+672x_4^2x_5^5-5103x_4^2x_6^5-448x_4x_5^6-5103x_4x_6^6+2443x_5^7\\
    &+10206x_5^6x_6+20412x_5^5x_6^2+22680x_5^4x_6^3+15120x_5^3x_6^4+6048x_5^2x_6^5 +1344x_5x_6^6-2059x_6^7
\end{align*}
in $\C[x_1,x_2,x_3,x_4,x_5,x_6]$. Similar to the previous example, build matrix $A$, which shows $\rk A=6$. This shows that $\frac{\partial f}{\partial x_i}$ are linearly independent and $f$ is nondegenerate. By a direct calculation, the general solution of $Z(f)$ reads $X=\lambda_{1}X_1+\lambda_{2}X_2+\lambda_{3}X_3+\lambda_{4}X_4+\lambda_{5}X_5+\lambda_{6}X_6$, where
\begin{equation*}
	X_1 = \begin{pmatrix} 0 & 0 & 0 & 0 & 0 & 0 \\ 0 & 2 & 1 & 0 & 0 & 0 \\0 & 0 & 0 & 0 & 0 & 0 \\0 & 0 & 0 & 0 & 0 & 0 \\0 & 0 & 0 & 0 & 0 & 0 \\0 & 0 & 0 & 0 & 0 & 0 \end{pmatrix}, \, X_2 = \begin{pmatrix} 0 & 0 & 0 & 0 & -6 & -4 \\ 0 & 0 & 0 & 0 & 12 & 8 \\ 0 & 0 & 0 & 0 & 6 & 4 \\0 & 0 & 0 & 0 & 6 & 4 \\0 & 0 & 0 & 0 & 3 & 2 \\0 & 0 & 0 & 0 & 0 & 0 \end{pmatrix}, \, X_3 = \begin{pmatrix} 4 & 0 & 4 & 0 & 0 & 0 \\ -8 & 0 & -8 & 0 & 0 & 0 \\ 5 & 0 & 5 & 0 & 0 & 0 \\ -4 & 0 & -4 & 0 & 0 & 0 \\ -2 & 0 & -2 & 0 & 0 & 0 \\ 3 & 0 & 3 & 0 & 0 & 0 \end{pmatrix},
\end{equation*}
\begin{equation*}
	X_4 = \begin{pmatrix} -5 & 0 & 0 & 4 & -18 & 0 \\ 10 & 0 & 0 & -8 & 36 & 0 \\5 & 0 & 0 & -4 & 18 & 0 \\ -4 & 0 & 0 & -13 & 18 & 0 \\ -2 & 0 & 0 & -2 & 0 & 0 \\ 3 & 0 & 0 & 3 & 0 & 0 \end{pmatrix}, \, X_5 = \begin{pmatrix} -5 & 0 & 0 & 0 & -10 & 0 \\ 10 & 0 & 0 & 0 & 20 & 0 \\ 5 & 0 & 0 & 0 & 10 & 0 \\ -4 & 0 & 0 & 0 & -8 & 0 \\ -2 & 0 & 0 & 0 & -4 & 0 \\ 3 & 0 & 0 & 0 & 6 & 0 \end{pmatrix}, \, X_6 = \begin{pmatrix} 1 & 0 & 0 & 0 & 0 & 0 \\ -2 & 0 & 0 & 0 & 0 & 0 \\ 0 & 0 & 1 & 0 & 0 & 0 \\0 & 0 & 0 & 1 & 0 & 0 \\0 & 0 & 0 & 0 & 1 & 0 \\0 & 0 & 0 & 0 & 0 & 1 \end{pmatrix}.
\end{equation*}
	%Since $\dim Z(f)=6$, we continue to determine whether $f$ is diagonalizable. 
The minimal polynomials~$p_i(\lambda)$ of $X_1,\ldots,X_6$ are $\lambda^2-\lambda$, $\lambda^2-1.5\lambda$, $\lambda^2-3\lambda$, $\lambda^2+3\lambda$, $\lambda^2+1.5\lambda$ and $\lambda^2-\lambda$, respectively. Since $GCD\left(p_i(\lambda),p_i^{'}(\lambda)\right)=1$ for $i=1,\ldots,6$, we deduce that $f$ is diagonalizable.
\end{example}

\section{Diagonalizing homogeneous polynomials}\label{sec:diagonalize}

After determining that~$f$ is diagonalizable, Algorithm~\ref{alg:diagonalization} iteratively computes orthogonal idempotents $(\epsilon,I-\epsilon)$ and decomposes $f$ until obtaining a diagonal form. The algorithm then determines whether the diagonalization is orthogonal and unitary. Our analysis shows that this process may not have polynomial time complexity since it involves solving a nonlinear system, which is NP-hard. However, numerical results indicate that its computation is performed efficiently. This process is analyzed in three steps: compute idempotents, diagonalize form, and detect orthogonality, which are discussed in Subsections~\ref{sec:idempotent},~\ref{sec:diagonal_form} and~\ref{sec:ortho_unit}, respectively. Two examples are given to demonstrate the iterative process of the diagonalization method in Subsection~\ref{sec:d_example}.

\subsection{Compute idempotents}\label{sec:idempotent}
	
A pair of orthogonal idempotents $(\epsilon,I-\epsilon)$ is computed by imposing $\rk \epsilon=1$ and $\tr(\epsilon)=1$ conditions on~$Z(f)$.% as described in Algorithm~\ref{alg:idempotent}. 
The resulting system of equations $N$ consists of both linear and quadratic equations. Note that we have obtained the RREF~$C_{R}$ for $Z(f)$ in previous steps, which are transformed into $n^2-n$ linear equations. 
    %Let the solution~$X$ of~$Z(f)$ in Equation~\eqref{eqn:center} takes the form of
	%\[\begin{pmatrix}x_{11} & \cdots & x_{1n}\\\vdots & \ddots &\vdots \\x_{n1} & \cdots & x_{nn}\\\end{pmatrix}.\]
The $\rk \epsilon=1$ condition is reinforced by ensuring rows of matrix $X=\left(x_{ij}\right)_{1 \le i, j \le n}$ are linearly dependent, such that
\[
    \lambda_i\left(x_{11},x_{12},\ldots,x_{1n})=(x_{i1},x_{i2},\ldots,x_{in}\right) \hspace{0.6cm} \text{for} \hspace{0.15cm} i=2,\ldots,n.	
\]
They are transformed into $(n-1)^2$ quadratic equations in the form of
\[
    x_{1,j} x_{i+1,j+1} = x_{1,j+1} x_{i+1,j} \hspace{0.6cm} \text{for} \hspace{0.15cm} i,j=1,\ldots,n-1.
\]
The $\tr(\epsilon)=1$ condition is imposed directly as $x_{11}+x_{22}+\cdots+x_{nn}=1$. The construction of these three sets of equations take~$O(n^3)$, $O(n^2)$ and~$O(n)$ operations, respectively. Thus, the overall construction of $N$ requires $O(n^3)$ operations.
	
	%\begin{algorithm}
	%	\caption{Compute idempotents of polynoimlals}\label{alg:idempotent}
	%	\begin{algorithmic} [1]
	%	\Statex \textbf{Input}: RREF $C_{R}$
	%	\Statex \textbf{Output}: orthogonal idempotent $\epsilon$
	%	\State Build and solve system of equations $N$ using $Z(f)$ with rank 1 and trace 1 conditions.
	%	\State If $N$ has no solution, reject as $f$ is not diagonalizable. Otherwise, choose one as $\epsilon$ for diagonalization.
	%	\end{algorithmic}
	%\end{algorithm}
    %$2n^2-3n+2$ equations, including
    
System~$N$ contains $n^2-n+1$ linear equations and $n^2-2n+1$ quadratic equations. Note that $C_{R}$ is naturally sparse and each quadratic equation comprises two terms with four variables regardless of the size of~$n$ and~$d$. In addition, $X_{1},X_{2},\ldots,X_{n}$ of~$Z(f)$ often contain zero entries. Thus, even though the size of $N$ increases quadratically as~$n$ increases, $N$ remains relatively sparse and efficient to solve. System $N$ may have multiple solutions and one will be chosen as $\epsilon$ to decompose $f$. If no solution is found, we deduce that~$f$ is not diagonalizable.
    %While the size of $N$ increases quadratically as~$n$ increases, it is sparse and computationally efficient to solve. Firstly, linear equations mostly come from the RREF $C_{R}$, which is naturally a sparse matrix. Secondly, each quadratic equation comprises two terms with four variables regardless of the size of~$n$ and~$d$. Thirdly, the basis matrices $X_{1},\ldots,X_{n}$ of~$Z(f)$ often contain zero entries, which further reduces the costs for solving~$N$. 
	
Nonlinear systems are often solved numerically using iterative methods like Newton's method or its variants~\cite{cordero2012increasing}. Shin et al.~\cite{shin2010comparison} compared the Newton–Krylov method with other Newton-like methods and argued that its performance is superior given sparse and large systems. Nevertheless, solving random quadratic systems of equations is still known to be NP-hard in general~\cite{chen2017solving}. Though the numerical experiments in Appendix~\ref{app:cost} show that its computational costs increase with order~$O(n^{1.85})$ for $n\le7$. We denote the complexity of solving $N$ as $O(\rho)$ for the rest of the article and the complexity of this step is~$O(n^3+\rho)$. 
 
 %Wang et al.~\cite{wang2017solving} presented a new technique with linear growth in complexity concerning the number of unknowns and equations. While it is hard to present an accurate complexity bound for solving~$N$ with both linear and quadratic equations, it is polynomially bounded given its sparse structures. Additionally, the numerical experiments in Appendix~\ref{app:cost} show that computational costs for solving $N$ increase with order~$O(n^{1.85})$ for $n\le7$. 
 
 %It is also the most computationally expensive component of this step.

%Potential algorithms for solving nonlinear system of equations, find one suitable one: trust-region-dogleg, trust-region, Levenberg-Marquardt algorithm, check website below
%how we choose solution: prefer one that is nonzero at 1,1 entry
%https://ww2.mathworks.cn/help/optim/ug/optim.problemdef.optimizationproblem.solve.html
%https://ww2.mathworks.cn/help/optim/ug/fsolve.html
%https://ww2.mathworks.cn/help/optim/ug/choosing-the-algorithm.html

	\begin{proposition} \label{pro:idempotent}
		Computing an orthogonal idempotent~$\epsilon$ takes~$O(n^3+\rho)$ arithmetic operations.
	\end{proposition}

\subsection{Diagonalize form}\label{sec:diagonal_form}

The diagonalization algorithm decomposes $f$ using the orthogonal idempotents~$(\epsilon,I-\epsilon)$ as described in Algorithm~\ref{alg:diag_form}. If $f$ is not in a diagonal form after the decomposition, Algorithm~\ref{alg:diagonalization} will continue to compute~$Z(f)$ for the decomposed form of $f$ and calculate a new~$\epsilon$. This iterative process terminates until the form of $f$ becomes diagonal. In this section, we describe the diagonalize form step in an iterative setting and denote the idempotent in iteration $i$ as $\epsilon_i$.

	\begin{algorithm}
	\caption{Diagonalize polynomial~$f$}\label{alg:diag_form}
		\begin{algorithmic} [1]
		\Statex \textbf{Input}: idempotent $\epsilon$
		\Statex \textbf{Output}: decomposed form of $f$
		\State Compute permutation matrix~$P$ using a pair of orthogonal idempotents ($\epsilon,I-\epsilon$) 
        \State Apply change of variables $x=Py$ to decompose $f$
		\State If $f$ consists of a disjoint set of variables, it is fully diagonalized. Otherwise; replace $f$ by $g$ with $n-1$ variables and continue to diagonalize $f$.
		\end{algorithmic}
	\end{algorithm}
	
We first calculate a pair of orthogonal idempotents ($\epsilon_1$, $I-\epsilon_1$) , where
	\[
		\epsilon_1 = \begin{pmatrix}
		x_{11}^{(1)} & x_{12}^{(1)} & \cdots & x_{1n}^{(1)}\\
		x_{21}^{(1)} & x_{22}^{(1)} & \cdots & x_{2n}^{(1)}\\
		\vdots & \vdots &\ddots &\vdots \\
		x_{n1}^{(1)} & x_{n2}^{(1)} & \cdots & x_{nn}^{(1)}\\
		\end{pmatrix} \quad\text{and}\quad I-\epsilon_1=\begin{pmatrix}
		1-x_{11}^{(1)} & -x_{12}^{(1)} & \cdots & -x_{1n}^{(1)}\\
		-x_{21}^{(1)} &1-x_{22}^{(1)} & \cdots & -x_{2n}^{(1)}\\
		\vdots & \vdots &\ddots &\vdots \\
		-x_{n1}^{(1)} & -x_{n2}^{(1)} & \cdots & 1-x_{nn}^{(1)} \end{pmatrix}.
	\]
The permutation matrix~$P_1$ of~$f$ now takes the form of a combination of $\epsilon$ and $I-\epsilon$ as explained in Proposition 3.7 in~\cite{huang2021diagonalizable}, where
	\[
		P_1=\begin{pmatrix}
		x_{11}^{(1)} & x_{12}^{(1)} & \cdots & x_{1r}^{(1)}\\
		-x_{21}^{(1)} &1-x_{22}^{(1)} & \cdots & -x_{2n}^{(1)}\\
		\vdots & \vdots &\ddots &\vdots \\
		-x_{n1}^{(1)} & -x_{n2}^{(1)} & \cdots & 1-x_{nn}^{(1)}
		\end{pmatrix}.
	\]
Decompose~$f$ by applying a change of variables
\[
	\left(y_{1}^{(1)},y_{2}^{(1)},\ldots,y_{n}^{(1)}\right)^T=P_1 \left(y_1,y_2,\ldots,y_n\right)^T,
\]
where~$y_{1}^{(1)},y_{2}^{(1)},\ldots,y_{n}^{(1)}$ are new variables in the $1$-st iteration. %and~$y_1,y_2,\ldots,y_n$ are variables in~$f$. 
%Substitute linear transformation
%\[ \left(y_1,y_2,\ldots,y_n\right)^T=P_1^{-1}\left(y_{1}^{(1)},y_{2}^{(1)},\ldots,y_{n}^{(1)}\right)^T \]
%into~$f$ and 
We then obtain
\[
	f=c_1\left(y_{1}^{(1)}\right)^d+g_1\left(y_{2}^{(1)},y_{3}^{(1)},\ldots,y_{n}^{(1)}\right),
\]
where $c_1$ is a $d$-th power linear form and $g_1$ is a nondegenerate form of degree $d$ in $n-1$ variables.

	%Form~$l_1$ is a form only containing variable~$y_{1}^{(1)}$ and~$g$ is a nondegenerate form of degree~$d$ in~$n-1$ variables, which are linear forms of $y_{2}^{(1)},y_{3}^{(1)},\cdots,y_{n}^{(1)}$ independent from~$l_1$.
	
%Otherwise, we replace~$f$ by~$g_1\in V_{n-1,d}$

	%We then check whether~$g_1$ is in a diagonal form. 
If $g_1$ is diagonal, the diagonalization process completes and we continue to determine properties of the diagonalization; otherwise, we return to step 5 of Algorithm~\ref{alg:diagonalization}. After computing new center~$Z(g_1)$ and corresponding orthogonal idempotent~$(\epsilon_2,I-\epsilon_2)$, we have
	\[
		P_2 = \begin{pmatrix}
		I_1 & 0 & 0 & \cdots & 0\\
		0 &x_{22}^{(2)} & x_{23}^{(2)}&\cdots & x_{2r}^{(2)}\\
		0 &-x_{32}^{(2)} & 1-xp_{33}^{(2)}&\cdots & -x_{3n}^{(2)}\\
		\vdots & \vdots &\vdots &\ddots &\vdots \\
		0 & -x_{n2}^{(2)} & -x_{n3}^{(2)} & \cdots & 1-x_{nn}^{(2)}
		\end{pmatrix}.
	\]
Then decompose~$g_1$ by applying the change of variables
\[
    \left(y_{2}^{(2)},\ldots,y_{n}^{(2)}\right)^T=P_2 \left(y_{2}^{(1)},\ldots,y_{n}^{(1)}\right)^T
\]
%where~$y_{1}^{(2)},y_{2}^{(2)},\ldots,y_{n}^{(2)}$ are new variables for the second iteration. 
    %After calculating an inverse~$P_2^{-1}$ of~$P_2$, 
%Substitute linear transformation
%\[ \left(y_{2}^{(1)},\ldots,y_{n}^{(1)}\right)^T=P_2^{-1}\left(y_{2}^{(2)},\ldots,y_{n}^{(2)}\right)^T \]
%into~$g_1$ 
and we have
\[
    g_1=c_2\left(y_{2}^{(2)}\right)^d+g_2\left(y_{3}^{(2)},\ldots,y_{n}^{(2)}\right).
\]
	%Form~$l_1$ consists of disjoint sets~$y_{1}^{(1)}$ and~$g$, which
$g_2$ is a nondegenerate form of degree~$d$ in~$n-2$ variables and is independent from~$y_{1}^{(1)}$ and~$y_{2}^{(2)}$.

Let $\left(y_{1}^{(k)},y_{2}^{(k)},\ldots,y_{n}^{(k)}\right)^T=P_k\left(y_{1}^{(k-1)},y_{2}^{(k-1)},\ldots,y_{n}^{(k-1)}\right)^T$, where $k$-th iteration permutation matrix $P_k$ is defined as
	\[
		P_k=
		\left(
		\begin{array}{c|cccc}
		I_k & 0 & 0 & \cdots &0 \\\hline
		0&x_{k,k}^{(k)} & x_{k,k+1}^{(k)}&\cdots & x_{k,n}^{(k)}\\
		0&-x_{k+1,k}^{(k)} & 1-x_{k+1,k+1}^{(k)}&\cdots & -x_{k+1,n}^{(k)}\\
		\vdots & \vdots &\vdots &\ddots &\vdots \\
		0& -x_{n,k}^{(k)} & -x_{n,n+1}^{(k)} & \cdots & 1-x_{n,n}^{(k)}
		\end{array}
		\right).
	\]
	
This process continues until the form $g_k$ becomes diagonal. % or $k=n-1$. 
The center then takes the form of
	\[
	Z(f)=Z\left(c_1(y_{1}^{(k)})^d\right)\times Z\left(c_2(y_{2}^{(k)})^d\right)\times \cdots \times Z\left(c_n (y_{n}^{(k)})^d\right)
	\]
and $f$ is transformed into a sum of $n$ $d$-th powers by linear transformation
	\[
	f=c_1\left(y_{1}^{(k)}\right)^d+c_2\left(y_{2}^{(k)}\right)^d+ \cdots +c_n \left(y_{n}^{(k)}\right)^d.
	\]
	%Let $\epsilon={\bar{\epsilon}}_{n-1}{\bar{\epsilon}}_{n-2}\cdots{\bar{\epsilon}}_{1}$,
Let $P=P_{n-1}P_{n-2}\cdots P_1 $ and we have
    %, the linear transformation of $ \left(y_{1}^{(k)},y_{2}^{(k)},\ldots,y_{n}^{(k)}\right)^T =P\left(y_1,y_2,\ldots,y_n\right)^T$ decomposes original $f$. We assume that $y_i^{(k)}=z_i$ and have
\[
    \left(y_1,y_2,\ldots,y_n\right)^T=P^{-1} \left(y_{1}^{(k)},y_{2}^{(k)},\ldots,y_{n}^{(k)}\right)^T=P^{-1}\left(z_1,z_2,\ldots,z_n\right)^T,
\]
where $y_i^{(k)}=z_i$. 
    %After substituting the above transformation into $f$, 
	%\[f(y_1,y_2,\cdots,y_n) =\sum_{1\le i_1,\cdots,i_d\le n} b_{i_1,\cdots ,i_d}y_{i_1}\cdots y_{i_d},\]
%	a sum of powers can be obtained as
$f$ is now represented as
	\[
	f(P^{-1}z)=c_1z_1^d+c_2z_2^d+\cdots+c_nz_n^d.
	\]
	%where $\left(z_1,z_2,\ldots,z_n\right)^T =P\left(y_1,y_2,\ldots,y_n\right)^T$.
    
If the original form of $f$ is degenerate, a matrix $\bar{P}$ is built as
	\[
	\bar{P}=\left (
	\begin{array}{c|c}
	P & 0\\
	\hline
	0 & I_{\bar{n}-n}
	\end{array}
	\right ).
	\]
There is a linear transformation $\left(z_1,z_2,\ldots,z_{\bar{n}}\right)^T=\bar{P}A^{-1}\left(x_1,x_2,\ldots,x_{\bar{n}}\right)^T$ between the original variable~$x_i$ and~$z_i$. Let $Q=\bar{P}A^{-1}=\left(q_{ij}\right)_{1\le i,j\le \bar{n}}$ and we have $z_i=\sum_{j=1}^{\bar{n}} q_{ij} x_j$.
Substitute this back to the sum of powers and we get
	\[
		f=c_1\left(\sum_{i=1}^{\bar{n}} q_{1i} x_i\right)^d+\cdots+c_{\bar{n}}\left(\sum_{i=1}^{\bar{n}} q_{\bar{n}i} x_i\right)^d.
	\]

The diagonal form step calculates $P$ and its inverse, which takes~$O(n^2)$ and~$O(n^3)$ operations, respectively. While the change of variables takes at most~$O(n^{2}t)$ operations, we need to expand $t$ terms of~$f$, which involves several multivariate polynomials multiplications. Since multiplying two polynomials takes $O(n\log{n}\log{\log{n}})$ operations for polynomials with $d<n$, expanding each term takes at most $O(nd\log{n}\log{\log{n}})$ operations~\cite{cantor1991fast}. Therefore, the overall complexity of this step is~$O(ndt\log{n}\log{\log{n}})$.

\begin{proposition} \label{pro:diagonalize_form}
    Diagonalizing the form of $f$ has complexity of $O(ndt\log{n}\log{\log{n}})$.
\end{proposition}

\subsection{Detect orthogonality}\label{sec:ortho_unit}

We determine the orthogonality and unitarity of the diagonalization using the permutation matrix~$P$ computed from the previous step. If $P P^{T}$ is diagonal, the diagonalization is orthogonal. If $P \bar{P}^{T}$ is diagonal, the diagonalization is unitary. 
    %as described in Algorithm~\ref{alg:ortho_unit}.
%    once $f$ becomes a diagonal form. In contrast to other computationally expensive techniques, our diagonalization method uses the permutation matrix~$P$ to straightforwardly determine these properties as described in Algorithm~\ref{alg:ortho_unit}. 
This process involves the calculation of a transpose~$P^{T}$ and a conjugate transpose~$\bar{P}^{T}$ of matrix $P$, which both take $O(n^2)$ arithmetic operations. In addition to two matrix multiplications with complexity $O(n^3)$, the overall computational complexity of this step is~$O(n^3)$.
	
	%\begin{algorithm}
	%	\caption{Detect orthogonality or unitarity of diagonalization}\label{alg:ortho_unit}
	%	\begin{algorithmic} [1]
	%	\Statex \textbf{Input}: permutation matrix $P$
	%	\Statex \textbf{Output}: orthogonality and unitarity of diagonalization
	%	\State Calculate matrix $P$'s transpose~$P^{T}$ and conjugate transpose~$\bar{P}^{T}$.
	%	\State If $P P^{T}$ is diagonal, diagonalization is orthogonal; otherwise, diagonalization is not orthogonal.
	%	\State If $P \bar{P}^{T}$ is diagonal, diagonalization is unitary; otherwise, diagonalization is not unitary.
	%	\end{algorithmic}
	%\end{algorithm}

	\begin{proposition} \label{pro:orthogonal}
		Determining orthogonality and unitarity of $f$ takes $O(n^3)$ arithmetic operations.
	\end{proposition}

In summary, the iterative diagonalization process, consists of the compute center, compute idempotents, and diagonalize form steps, which have complexity of $O(n^3d^3t+n^{d+4})$, $O(n^3+\rho)$ and $O(ndt\log{n}\log{\log{n}})$, respectively. This iterative process lasts at most~$n-1$ iterations and $n$ is reduced by 1 in each iteration. Therefore, the entire process takes $O(\log n)$ costs of the three steps above. Lastly, the orthogonality and unitarity are determined straightforwardly with $O(n^3)$ operations, which does not affect the overall complexity.

\subsection{Some examples}\label{sec:d_example}

\begin{example} \label{d_e1}
	
	Consider the quadruple quintic polynomial $f$ in Example~\ref{d_c1}, whose diagonalizability has been verified. Build and solve system $N$ using its center $Z(f)$ with rank 1 and trace 1 conditions, which may give more than one solution. Choose one solution as $\epsilon_1$ with nonzero 1,1-th entry and obtain a pair of orthogonal idempotents $(\epsilon_1,I_4-\epsilon_1)$, where
	\begin{equation*}
		\epsilon_1 = \begin{pmatrix} \frac{1}{2} & \frac{1}{2} & -\frac{1}{2} & -1 \\ \frac{3}{2} & \frac{3}{2} & -\frac{3}{2} & -3 \\ 0 & 0 & 0 & 0 \\ \frac{1}{2} & \frac{1}{2} & -\frac{1}{2} & -1 \end{pmatrix}, \quad 
        I_4-\epsilon_1 = \begin{pmatrix} \frac{1}{2} & -\frac{1}{2} & \frac{1}{2} & 1 \\ -\frac{3}{2} & -\frac{1}{2} & \frac{3}{2} & 3 \\ 0 & 0 & 1 & 0 \\ -\frac{1}{2} & -\frac{1}{2} & \frac{1}{2} & -2 \end{pmatrix}.
	\end{equation*}
	According to the pair $(\epsilon_1,I_4-\epsilon_1)$, we take a change of variables $y=P_{1}x$, where
	\begin{equation*}
		P_{1} = \begin{pmatrix} \frac{1}{2} & \frac{1}{2} & -\frac{1}{2} & -1 \\ -\frac{3}{2} & -\frac{1}{2} & \frac{3}{2} & 3 \\ 0 & 0 & 1 & 0 \\ -\frac{1}{2} & -\frac{1}{2} & \frac{1}{2} & 2 \end{pmatrix}.
	\end{equation*}
	% P_{1} = \begin{pmatrix} 1/2 & 1/2 & -1/2 & -1 \\ -3/2 & -1/2 & 3/2 & 3 \\ 0 & 0 & 1 & 0 \\ -1/2 & -1/2 & 1/2 & -2 \end{pmatrix},
	%$y_1 = \frac{1}{2}x_1+\frac{1}{2}x_2-\frac{1}{2}x_3-x_4$, $y_2 = -\frac{3}{2}x_1-\frac{1}{2}x_2+\frac{3}{2}x_3+3x_4$, $y_3 = x_3$ and $y_4 = -\frac{1}{2}x_1-\frac{1}{2}x_2+\frac{1}{2}x_3+2x_4$.
	By direct computation we have
	\begin{align*}
		f(P^{-1}_{1}y)&=1024y_1^5+7807y_2^5-51915y_2^{4}y_3-64955y_2^{4}y_4+138310y_2^{3}y_3^2+345900y_2^3y_3y_4+216310y_2^3y_4^2\\
        &-184350y_2^2y_3^3-691410y_2^2y_3^2y_4-864450y_2^2y_3y_4^2-360310y_2^2y_4^3+122885y_2y_3^4+614460y_2y_3^3y_4\\
        &+1152210y_2y_3^2y_4^2+960300y_2y_3y_4^3+300155y_2y_4^4-32768y_3^5-204805y_3^4y_4-512030y_3^3y_4^2\\
        &-640070y_3^2y_4^3-400075y_3y_4^4-100031y_4^5.
	\end{align*}
Note that the decomposition takes the form of $f(P^{-1}_{1}y) = t(y_1)+g(y_2,y_3,y_4)$, where~$t(y_1)=-1024y_1^5$. Since~$g$ is not diagonal, we continue to diagonalize $g$. Calculate $Z(g)=\{X_g \in \C^{3\times 3}|(H_{g}X_g)^T=X_gH_{g}\}$ and the general solution is
	\begin{equation*}
	X_g = \lambda_{1}\begin{pmatrix} -8 & 8& 13 \\ -16 & 11 & 16 \\ 1 & 0 & 0 \end{pmatrix}+\lambda_{2}\begin{pmatrix} -16 & 11 & 16 \\ -12 & 2 & 12 \\ 0 & 1 & 0 \end{pmatrix}+\lambda_{3}\begin{pmatrix} 1 & 0 & 0\\ 0 & 1 & 0 \\ 0 & 0 & 1 \end{pmatrix}.
	\end{equation*}
	We obtain a new pair orthogonal idempotents $(\epsilon_2, I_3-\epsilon_2)$ based on $X_g$, where
	\begin{equation*}
		\epsilon_2=\begin{pmatrix} \frac{3}{2} & -\frac{3}{2} & -\frac{3}{2} \\ -2 & 2 & 2 \\ \frac{5}{2} & -\frac{5}{2} & -\frac{5}{2} \end{pmatrix}, \quad I_3-\epsilon_2=\begin{pmatrix} -\frac{1}{2} & \frac{3}{2} & \frac{3}{2} \\ 2 & -1 & -2 \\ -\frac{5}{2} & \frac{5}{2} & \frac{7}{2} \end{pmatrix}.
	\end{equation*}
We then take a change of variables $z=P_{2}y$ according to $(\epsilon_2,I_3-\epsilon_2)$, where %$y=P_{2}z$
	\begin{equation*}
	P_{2} = \begin{pmatrix} 1 & 0 & 0 & 0 \\0 & \frac{3}{2} & -\frac{3}{2} & -\frac{3}{2} \\ 0 & 2 & -1 & -2 \\ 0 & -\frac{5}{2} & \frac{5}{2} & \frac{7}{2} \end{pmatrix}.
	\end{equation*}
	
	%$z_1 = 1.5y_1-1.5y_2-1.5y_3$, $z_2 = 2y_1-y_2-2y_3$ and $z_3 = -2.5y_1+2.5y_2+3.5y_3$.
	By direct computation we have
	\begin{equation*}
		g(P_{2}^{-1}z)=-0.13162z_2^5-31z_3^5-320z_3^4z_4-1280z_3^3z_4^2-2560z_3^2z_4^3-
	2560z_3z_4^4-1024z_4^5.
	\end{equation*}
	Note that the decomposition takes the form of $g(P^{-1}_{2}z) = t(z_2)+h(z_3,z_4)$, where~$t(z_2)=-0.13162z_2^5$. Since~$h$ is not diagonal, we continue to diagonalize $h$. Compute $Z(h)=\{X_h \in \C^{2\times 2}|(H_{h}X_h)^T=X_hH_{h}\}$ and the general solution is
	\begin{equation*}
		X_h = \lambda_{1}\begin{pmatrix} -2 & 0\\ 1 & 0 \end{pmatrix}+\lambda_{2}\begin{pmatrix} 1 & 0\\ 0 & 1 \end{pmatrix}.
	\end{equation*}
Compute orthogonal idempotents $(\epsilon_3, I_2-\epsilon_3)$, we have
\begin{equation*}
    \epsilon_3=\begin{pmatrix} 1 & 0\\ -\frac{1}{2} & 0 \end{pmatrix}, \quad I_2-\epsilon_3=\begin{pmatrix} 0 & 0\\ \frac{1}{2} & 1 \end{pmatrix}.
    \end{equation*}
Take a change of variables $w=P_{3}z$ according to the pair ($\epsilon_3,I_2-\epsilon_3$), %$z=P_{3}w$
	\begin{equation*}
		P_{3} = \begin{pmatrix} 1 & 0 & 0 & 0 \\ 0 & 1 & 0 & 0 \\ 0 & 0 & 1 & 0 \\ 0 & 0 & \frac{1}{2} & 1 \end{pmatrix}.
	\end{equation*}
	By direct computation, we have the following decomposition
	\[
	h(P^{-1}_{3}w)=w_3^5-1024w_4^5.
	\]
Since $h$ is in diagonal form, the diagonalization process terminates. The final change of variables~$w=Px$ takes the form of
\[
	P = P_3P_2P_1 = \begin{pmatrix} \frac{1}{2} & \frac{1}{2} & -\frac{1}{2} & -1 \\ -\frac{3}{2} & 0 & 0 & \frac{3}{2} \\-2 & 0 & 1 & 2 \\ 1 & -\frac{1}{2} & 1 & \frac{1}{2} \end{pmatrix}
\]
and we obtain
\[
    f(P^{-1}w)=1024w_1^5-0.13162w_2^5+w_3^5-1024w_4^5.
\]
Finally, $f$ is diagonalized as
    \[
        f\left(x_1,x_2,x_3,x_4\right) = \left(2x_1+2x_2-2x_3-4x_4\right)^5+\left(x_1-x_4\right)^5+\left(-2x_1+x_3+2x_4\right)^5+\left(-4x_1+2x_2-4x_3+2x_4\right)^5.
    \]
In addition, after examining the permutation matrix $P$, we deduce that the diagonalization of $f$ is neither orthogonal nor unitary.

\end{example}
	
\begin{example}\label{d_e2}
Consider the sextuple septic polynomial $f$ defined in Example~\ref{d_c2}, whose diagonalizability has been verified. Build and solve system $N$ based on $Z(f)$, we calculate a pair of orthogonal idempotents $(\epsilon_1,I_6-\epsilon_1)$, where
	\begin{equation*}
		\epsilon_1 = \begin{pmatrix} \frac{4}{9} & 0 & \frac{4}{9} & 0 & 0 & 0 \\ -\frac{8}{9} & 0 & -\frac{8}{9} & 0 & 0 & 0 \\ \frac{5}{9} & 0 & \frac{5}{9} & 0 & 0 & 0 \\ -\frac{4}{9} & 0 & -\frac{4}{9} & 0 & 0 & 0 \\ -\frac{2}{9} & 0 & -\frac{2}{9} & 0 & 0 & 0 \\ \frac{1}{3} & 0 & \frac{1}{3} & 0 & 0 & 0 \\ \end{pmatrix}, \quad
        I_6-\epsilon_1 = \begin{pmatrix} \frac{5}{9} & 0 & -\frac{4}{9} & 0 & 0 & 0 \\ \frac{8}{9} & 1 & \frac{8}{9} & 0 & 0 & 0 \\ -\frac{5}{9} & 0 & \frac{4}{9} & 0 & 0 & 0 \\ \frac{4}{9} & 0 & \frac{4}{9} & 1 & 0 & 0 \\ \frac{2}{9} & 0 & \frac{2}{9} & 0 & 1 & 0 \\ -\frac{1}{3} & 0 & -\frac{1}{3} & 0 & 0 & 1 \\ \end{pmatrix}.
	\end{equation*}
	According to $(\epsilon_1,I_6-\epsilon_1)$, we take a change of variables $y=P_{1}x$, where
	\begin{equation*}
		P_{1} = \begin{pmatrix} \frac{4}{9} & 0 & \frac{4}{9} & 0 & 0 & 0 \\ \frac{8}{9} & 1 & \frac{8}{9} & 0 & 0 & 0 \\ -\frac{5}{9} & 0 & \frac{4}{9} & 0 & 0 & 0 \\ \frac{4}{9} & 0 & \frac{4}{9} & 1 & 0 & 0 \\ \frac{2}{9} & 0 & \frac{2}{9} & 0 & 1 & 0 \\ -\frac{1}{3} & 0 & -\frac{1}{3} & 0 & 0 & 1 \end{pmatrix}
	\end{equation*}
	and
\begin{align*}
	f(P^{-1}_{1}y)&=-291.9293y_1^7+1y_2^7-14y_2^6y_3+84y_2^5y_3^2-280y_2^4y_3^3+560y_2^3y_3^4-672y_2^2y_3^5+448y_2y_3^6-128y_3^7\\
    &-7y_3^6y_4+14y_3^6y_5-21y_3^6y_6+21y_3^5y_4^2+126y_3^5y_4y_6-84y_3^5y_5^2+189y_3^5y_6^2-35y_3^4y_4^3-315y_3^4y_4^2y_6\\
    &-945y_3^4y_4y_6^2+280y_3^4y_5^3-945y_3^4y_6^3+35y_3^3y_4^4+420y_3^3y_4^3y_6+1890y_3^3y_4^2y_6^2+3780y_3^3y_4y_6^3\\
    &-560y_3^3y_5^4+2835y_3^3y_6^4-21y_3^2y_4^5-315y_3^2y_4^4y_6-1890y_3^2y_4^3y_6^2-5670y_3^2y_4^2y_6^3-8505y_3^2y_4y_6^4+672y_3^2y_5^5\\
    &-5103y_3^2y_6^5+7y_3y_4^6+126y_3y_4^5y_6+945y_3y_4^4y_6^2+3780y_3y_4^3y_6^3+8505y_3y_4^2y_6^4+10206y_3y_4y_6^5\\
    &-448y_3y_5^6+5103y_3y_6^6-2y_4^7+14y_4^6y_5-21y_4^6y_6-84y_4^5y_5^2-189y_4^5y_6^2+280y_4^4y_5^3-945y_4^4y_6^3\\
    &-560y_4^3y_5^4-2835y_4^3y_6^4+672y_4^2y_5^5-5103y_4^2y_6^5-448y_4y_5^6-5103y_4y_6^6+2443y_5^7+10206y_5^6y_6\\
    &+20412y_5^5y_6^2+22680y_5^4y_6^3+15120y_5^3y_6^4+6048y_5^2y_6^5+1344y_5y_6^6-2059y_6^7.
\end{align*}
Note that the decomposition takes the form of $f(P^{-1}_{1}y) = t(y_1)+g(y_2,\ldots,y_6)$, where~$t(y_1)=-291.9293y_1^7$. Since~$g$ is not diagonal, we continue to diagonalize $g$. Repeat the above process for another 4 iterations and we obtain the following $P_i$ for changes of variables $y^{(i+1)}=P_{i}y^{(i)}$ for $i=2,\ldots,5$, where 
	\[
	P_{2} = \begin{pmatrix} 1 & 0 & 0 & 0 & 0 & 0 \\ 0 & 1 & -2 & 0 & 0 & 0 \\ 0 & 0 & 1 & 0 & 0 & 0 \\ 0 & 0 & 0 & 1 & 0 & 0 \\ 0 & 0 & 0 & 0 & 1 & 0 \\ 0 & 0 & 0 & 0 & 0 & 1 \end{pmatrix}, \quad P_{3} = \begin{pmatrix} 1 & 0 & 0 & 0 & 0 & 0 \\ 0 & 1 & 0 & 0 & 0 & 0 \\ 0 & 0 & \frac{4}{9} & -\frac{4}{9} & 0 & -\frac{4}{3} \\ 0 & 0 & -\frac{4}{9} & \frac{13}{9} & 0 & \frac{4}{3} \\ 0 & 0 & -\frac{2}{9} & \frac{2}{9} & 0 & \frac{2}{3} \\ 0 & 0 & \frac{1}{3} & -\frac{1}{3} & 0 & 0 \end{pmatrix},
	\]
	\[
	\quad P_{4} = \begin{pmatrix} 1 & 0 & 0 & 0 & 0 & 0 \\ 0 & 1 & 0 & 0 & 0 & 0 \\ 0 & 0 & 1 & 0 & 0 & 0 \\ 0 & 0 & 0 & -\frac{4}{9} & \frac{8}{9} & \frac{4}{3} \\ 0 & 0 & 0 & \frac{2}{9} & \frac{5}{9} & \frac{2}{3} \\ 0 & 0 & 0 & -\frac{1}{3} & \frac{2}{3} & 0 \end{pmatrix}, \quad P_{5} = \begin{pmatrix} 1 & 0 & 0 & 0 & 0 & 0 \\ 0 & 1 & 0 & 0 & 0 & 0 \\ 0 & 0 & 1 & 0 & 0 & 0 \\ 0 & 0 & 0 & 1 & 0 & 0 \\ 0 & 0 & 0 & 0 & 1 & \frac{2}{3} \\ 0 & 0 & 0 & 0 & 0 & 1 \end{pmatrix}.
	\]
 
The final change of variables~$z=y^{(5)}=Px$ then takes the form of
\[
    P = P_5P_4P_3P_2P_1 = \begin{pmatrix} \frac{4}{9} & 0 & \frac{4}{9} & 0 & 0 & 0 \\ 2 & 1 & 0 & 0 & 0 & 0 \\ 0 & 0 & \frac{4}{9} & -\frac{4}{9} & 0 & -\frac{4}{3} \\ \frac{4}{9} & 0 & 0 & 0 & \frac{8}{9} & 0 \\ 0 & 0 & 0 & 0 & 1 & \frac{2}{3} \\ 0 & 0 & 0 & -\frac{1}{3} & \frac{2}{3} & 0 \\ \end{pmatrix}
\]
and we obtain
\[
    f(P^{-1}z)= -291.929z_1^7+z_2^7+291.929z_3^7+291.929z_4^7+2187z_5^7+2187z_6^7.
\]
Lastly, $f$ is diagonalized as
\[
    f\left(x_1,\ldots,x_6\right) = \left(-x_1-x_3\right)^7+\left(2x_1+x_2\right)^7+\left(x_3-x_4-3x_6\right)^7+\left(x_1+2x_5\right)^7+\left(3x_5+2x_6\right)^7+\left(-x_4+2x_5\right)^7.
\]
	The examination of $P$ shows that this diagonalization is neither orthogonal nor unitary.

\end{example}

\section{Comparison to other diagonalization techniques}\label{sec:compare}

Current diagonalization techniques, including~\cite{fedorchuk2020direct, kayal2011efficient, koiran2023absolute, koiran2021derandomization, robeva2016orthogonal} have successfully provided criteria and algorithms for the diagonalizations of homogeneous polynomials.
	%Diagonalization techniques have been developed by many authors, including Kayal~\cite{kayal2011efficient}, Koiran~\cite{koiran2021derandomization}, Koiran and Saha~\cite{koiran2023absolute}, Robeva~\cite{robeva2016orthogonal} and Fedorchuk~\cite{fedorchuk2020direct}. They either proposed criteria to determine diagonalizability or algorithms to diagonalize polynomials. 
 While they have made huge progress in the field, they are either only applicable to specific types of polynomials or appeal to highly nonlinear tasks such as polynomial factorization. We provide more discussions of their complexity below.
    %Thus, the resulting algorithms are often computationally expensive for~$f$ with large~$n$ or~$d$ and therefore impractical for real-world applications.
	
Kayal~\cite{kayal2011efficient} did some of the early work in this field and proposed a criterion and diagonalization algorithm for $f$ with arbitrary $n$ and $d$.
    %, which determines whether given~$f\in \C[x_1, x_2, \ldots,x_n]$ with degree~$d$ is equivalent to a sum of powers $f=\lambda_1 x_1^d+\lambda_2x_2^d+\ldots+\lambda_nx_n^d$ and diagonalize~$f$ if the criterion are satisfied. This method 
It is based on the factorization properties of the Hessian determinant~$\text{DET}(H)$ of $f$, which admits randomized polynomial complexity. The procedures are summarized in Algorithm~\ref{alg:kayal}. At step 1, determinant~$\text{DET}(H)$ may be factorized using Kaltofen's algorithm~\cite{kaltofen1989factorization} or the black box factorization algorithm of Kaltofen and Trager~\cite{kaltofen1990computing}, which both admits randomized polynomial time. 
    %Both of the techniques were developed for the factorization of a multivariate polynomial in randomized polynomial time. 
However, Kaltofen's algorithm uses Hensel lifting for $d$ iterations, which leads to an exponential blowup of sizes in modules where the previous computations cannot be reused~\cite{sinhababu2021factorization}. While more work has been done to improve its efficiency for high-degree polynomials with a sparse representation, the complexity is still quite high for large $d$. At step 2, the coefficients~$a_i$ can be obtained efficiently by solving a dense linear algebra problem. Similarly, the complexity of root findings at step 3 is polynomial and the equivalence problem is solved in randomized polynomial time.
    
\begin{algorithm}
		\caption{Kayal's algorithm}\label{alg:kayal}
		\begin{algorithmic} [1]
		\State Compute Hessian determinant~$\text{DET}(H)$ of~$f$ and check if it is identically 0 and can be factorized as $\text{DET}(H)=c\prod_{i=1}^{n}l_i(x_1,x_2, \ldots,x_n)^{d-2}$ where~$l_i$ are linear forms and~$c\in \C$. Reject if it is not possible.
		\State Calculate constants~$a_i \in \C$ such that~$f(x_1,x_2,\ldots,x_n)=\sum_{i=1}^{n}\lambda_{i}l_{i}(x_1,x_2,\ldots,x_n)^{d}$. Terminate if it is not possible.
		\State Check if all~$a_i$ values have $d$-th roots in $\C$. Reject if it is not possible.
		\end{algorithmic}
\end{algorithm}

%Root findings of univariate polynomials is considered an atomic operation and would not operate within the Turing machine model as discussed by Koiran~\cite{koiran2021derandomization}.

Koiran~\cite{koiran2021derandomization} proposed a deterministic criterion to determine the diagonalizability for polynomimals~$f$ with degree~$d=3$ as described in Algorithm~\ref{alg:koiran}. In contrast to Kayal's technique, it does not factorize Hessian determinant~$\text{DET}(H)$ explicitly. Recall that a homogeneous polynomial~$f$ with $d=3$ can be associated to a symmetric tensor~$T$ of order 3, where~$f(x_1,x_2,\ldots,x_n)=\sum_{i,j,k=1}^{n}T_{ijk}x_ix_jx_k$. We have~$\frac{\partial^3f}{\partial x_i \partial x_j \partial x_k} = 6T_{ijk}$ and the~$i$-th slice of~$T$ is the symmetric matrix~$T_{i}$ with entries~$(T_i)_{jk}=T_{ijk}$.

\begin{algorithm}
		\caption{Koiran's algorithm}\label{alg:koiran}
		\begin{algorithmic} [1]
		\State On input $f\in \C[x_1,x_2,\ldots,x_n]$, pick a random matrix $R\in M_n(\C)$ and set $h(x) = f (Rx)$.
		\State Let $T_1,T_2,\ldots,T_n$ of $h$ be the slices of $h$. If $T_1$ is singular, reject. Otherwise, compute $T_{1}'=T_1^{-1}$.
		\State If the matrices $T_{1}'T_k$ commute and are all diagonalizable over $\C$, accept. Otherwise, reject.
		\end{algorithmic} % page 12
\end{algorithm}
	
Koiran's criterion iterates through~$n$ slices~$T_i$ to determine their diagonalizability. Instead of factorizing explicitly, Koiran used the existence of the factorization to find deterministically a point where $H$ does not vanish. It includes routines such as calculations of roots from polynomials, which can be obtained through Hurwitz determinants. Koiran and Saha~\cite{koiran2023absolute} provided further improvement and complexity analysis on this algorithm, which showed that it requires $O(n^{\omega+2})$ arithmetic operations over the complex field and $\omega$ is a feasible exponent for matrix multiplication. Koiran and Saha also presented a new randomized black-box criterion that uses the extraction of complex polynomial roots to determine the diagonalizability of~$f$ with~$d>3$. This algorithm takes $O(n^{2}d\log^2{d}\log{\log{d}}+n^{\omega+1})$ arithmetic operations for $f\in\C$.
 
Robeva~\cite{robeva2016orthogonal} studied the diagonalization from the perspective of symmetric tensors and proposed to use the tensor power method from Anandkumar et al.~\cite{anandkumar2014tensor} or the tensor decomposition technique from Brachat et al.~\cite{brachat2010symmetric}, which is based on the properties of Hankel matrices. However, its algorithm is difficult to construct and requires solving a large nonlinear system of equations.

Fedorchuk~\cite{fedorchuk2020direct} interpreted the diagonalizability in terms of factorization properties of the Macaulay inverse system of its Milnor algebra as shown in Algorithm~\ref{alg:fedorchuk}. It firstly determines whether given $f$ is smooth and then obtains an associated form~$A(f)$ of $f$ by computing the degree $n(d-1)$ part of the Gr\"{o}bner basis of the Jacobian ideal $J_f$. This reduces the diagonalization to a polynomial factorization problem, which has polynomial time complexity. Fedorchuk also stated that step 2 of Algorithm~\ref{alg:fedorchuk} is computationally expensive for large~$n$ and~$d$.

\begin{algorithm}
		\caption{Fedorchuk's algorithm}\label{alg:fedorchuk}
		\begin{algorithmic} [1]
		\State Compute $J_f = \left(\frac{\partial f}{\partial x_1},\frac{\partial f}{\partial x_2},\ldots,\frac{\partial f}{\partial x_n}\right)$ up to degree $n(d-1)+1$. If $(J_f)_{n(d-1)+1}\neq \C[x_1,x_2,\ldots,x_n]_{n(d-1)+1}$, then $f$ is not smooth and we stop; otherwise, continue.
		\State Compute $A(f)$ as the dual to $(J_f)_{n(d-1)+1}$ and the irreducible factorization of $A(f)$ in $\C[x_1,x_2,\ldots,x_n]_{n(d-1)+1}$ and check for the existence of balanced direct product factorizations. If any exist, then $f$ is a direct sum; otherwise, $f$ is not a direct sum.
		\State Decompose $f$ using basis $V$ from computed factorization of $A(f)$.
		\end{algorithmic}
\end{algorithm}

%	The five studies discussed above rely on techniques such as polynomial factorization with random polynomial complexity. It may not be efficient for general~$n$ and~$d$~\cite{kayal2011efficient}. For instance, factorization is not feasible with arithmetic operations for $f$ with $d>2$. Additionally, few of the previous studies provide numerical results and detailed complexity analysis of their algorithms except~\cite{koiran2023absolute}.

% Kayal, Robeva and Fedorchuk's methods also diagonalize $f$ after determining its diagonalizability. Their techniques used tools

Some other studies focused on determining properties of diagonalization, such as orthogonality and unitarity~\cite{koiran2021orthogonal,kolda2000orthogonal,boralevi2017orthogonal,robeva2016orthogonal}. These properties are useful for areas like theoretical computer science and scientific computing. Kolda~\cite{kolda2000orthogonal} studied the orthogonal decomposition of tensors and computed the best rank-1 approximations by solving a minimization problem with orthogonality constraints. Kolda used an alternating least squares approach to solve the problem, which converges slowly and becomes a computationally challenging task. Robeva~\cite{robeva2016orthogonal} tackled the issue of orthogonal decompositions of symmetric tensors using the tensor power method. Boralevi et al.~\cite{boralevi2017orthogonal} proposed a randomized algorithm to check the orthogonality and unitarity of a tensor and decompose it if possible, which is based on singular value decompositions. Koiran~\cite{koiran2021orthogonal} expanded the study for orthogonal and unitary decompositions and over the field of complex numbers. Their techniques need to solve a large system of quadratic equations. In contrast, Huang et al.'s diagonalization algorithm only requires basic matrix inversions and multiplications to determine the orthogonality and unitarity of the diagonalization using computed permutation matrix $P$.

To conclude, many techniques have been proposed as criteria for diagonalizability, diagonalization, or criteria for orthogonality and unitarity. They either only apply to limited polynomials or use computationally expensive techniques. While some researchers provided complexity analysis, few gave sufficient numerical results for illustration and comparison. In contrast, our complexity analysis is validated by numerical experiments with concrete examples. This makes the center-based diagonalization algorithm a competitive alternative in this field.

\section{Summary}\label{sec:summary}

In this article, we provide detailed formulations and complexity analysis of the diagonalization technique based on Harrison's center theory. It consists of a criterion for diagonalizability and a diagonalization algorithm. The technique is divided into six steps and their computational complexity is summarized in Table~\ref{tab:complexity}. This table also contains the convergence rates from the numerical results given in Appendix~\ref{app:cost}. The convergence is measured for $f$ with various~$n$ and~$d$ values, which reflect given problem sizes. 
    %Additional numerical experiments have been provided to validate our analysis and it shows that its complexity is polynomial for both $n$ and $d$.

% Table caption in different page
%\newpage

Among the six steps, the detect nondegeneracy, compute center, and diagonalize form steps have the highest theoretical complexity. Their costs depend on the number of terms~$t$ in~$f$, which increases if either $n$ or $d$ increases. The detect nondegeneracy and compute center steps need to compute multiple first- or second-order derivatives of $f$. The diagonalize form step applies the change of variables to~$f$ and requires operations like polynomial multiplications or power expansions. Their complexity is consistent with their high convergence rates for~$n$ and $d$ as shown in numerical results.
%Though, these operations are highly parallelizable and will not be the computational bottleneck of the algorithm. 

\begin{center}
	\begin{table}%
	\centering
	\caption{Complexity of each step of diagonalization\label{tab:complexity}}%
		\begin{tabular*}{450pt}{@{\extracolsep\fill}lccc@{\extracolsep\fill}}%
		\toprule
		\textbf{Step} & \textbf{Complexity} & \textbf{Convergence $n$} & \textbf{Convergence $d$} \\
		\midrule
		detect nondegeneracy & $O(n^2t)$ & $O(n^{4.02})$ & $O(d^{2.50})$ \\
		compute center & $O(n^3d^3t+n^{d+4})$ & $O(n^{6.07})$ & $O(d^{2.85})$ \\
		detect diagonalizability & $O(n^5)$ & $O(n^{1.31})$ & $O(d^{-0.020})$ \\
		compute idempotent & $O(n^3+\rho)$ & $O(n^{2.5})$ & $O(d^{0.004})$ \\
		diagonalize form & $O(ndt\log{n}\log{\log{n}})$ & $O(n^{5.12})$ & $O(d^{2.93})$ \\
		detect orthogonality & $O(n^3)$ & $O(n^{0.062})$ & $O(d^{-0.40})$ \\
%		total & $O((n^3d^3t+n^{d+4})\log{n})$ & $O(n^{4.26})$ & $O(d^{2.39})$ \\
  		total & & $O(n^{4.26})$ & $O(d^{2.39})$ \\
		\bottomrule
		\end{tabular*}
	\end{table}
\end{center}
    
The complexity of the other three steps is independent of~$t$ and shows markedly lower convergence rates for $n$ and $d$. While we solve a quadratic system of equations~$N$ to compute idempotents, the system's dimension only depends on~$n$ and its entries are relatively sparse. Consequently, the convergence of its computational costs for $n$ is below quadratic and remains relatively unchanged when $d$ increases. Similarly, the increase in $d$ does not affect the costs of the detect diagonalizability step.

The criterion of diagonalizability consists of detect nondegeneracy, compute center and detect diagonalizability steps and the complexity of this process is 
    \[
        O(n^2t)+O(n^3d^3t+n^{d+4})+O(n^5) = O(n^3d^3t+n^{d+4}).
    \]
Since $t$ grows polynomially if either~$n$ or $d$ increases while the other is fixed. We deduce the complexity of this criterion is polynomially bounded for $n$ and $d$. The diagonalization process comprises an iterative process, including the compute center, compute idempotent, and diagonalize form steps, and terminates with at most $n-1$ steps. Consequently, its complexity is
	\[
		O(\log n)\left(O(n^3d^3t+n^{d+4})+ O(n^3+\rho) + O(ndt\log{n}\log{\log{n}})\right).
	\]
Since we cannot give an accurate bound for solving quadratic system $N$, its complexity may not be polynomially bounded. However, its computation is performed efficiently in the numerical experiments and it is shown that $O(\rho)$ is below quadratic for $n \le 7$.

We compared our technique with other criteria and diagonalization techniques regarding their computational costs. Many of them lack detailed complexity analysis and rely on methods such as the polynomial factorization and tensor power method, which require randomized polynomial time. Some other techniques like~\cite{kayal2011efficient,koiran2021derandomization,koiran2023absolute} may fail with a small probability. In contrast, the center-based diagonalization technique only requires solving sparse linear and quadratic systems and is applicable for $f$ with arbitrary $n$ and $d$.

\subsection{Future work}\label{sec:future}
%This article has provided complexity analysis for the center-based diagonalization technique. 
The exact complexity of solving its quadratic systems of equations is still unknown and needs to be addressed in future studies. Additionally, the numerical experiments showed that the rounding errors accumulated during the iterative process may affect the accuracy of the solution. Future studies should further investigate the algorithm's stability and error bounds, which are crucial to the practicability of this technique. Besides, current research mostly focuses on one polynomial. However, real-world applications may demand a simultaneous criterion and diagonalization of more than one polynomial. We also want to extend the complexity analysis to the direct sum decomposition and Waring decomposition of polynomials. 

%\centerline{\includegraphics[scale=1.2]{actmark.eps}}
%\centerline{\small Figure  1\quad Journal mark}
%\vspace{1mm}
%\th{Conflict of Interest} {\rm The authors declare that they have no known competing financial interests or personal relationships that could have appeared to influence the work reported in this paper.}
%\par\vspace{1mm}
%\noindent Please confirm the following statement:

%1) If you are an editorial board member/managing
%editors/editor-in-chief for Acta Mathematica Sinica, English Series, please fill in the following information in 
%this section: ``XX is an editorial board member/managing
%editors/editor-in-chief for Acta Mathematica Sinica, English Series
%and was not involved in the editorial review or the
%decision to publish this article. All authors declare that there are
%no competing interests."

%2) If you are not an editorial board member/managing
%editors/editor-in-chief for Acta Mathematica Sinica, English Series, please fill in the following information in 
%this section: ``The authors declare no conflict of interest. ''

\th{Conflict of Interest} The authors declare that they have no known competing financial interests or personal relationships that could have appeared to influence the work reported in this paper.

\appendix

\section{Computational cost of diagonalization}\label{app:cost}
In this section, we provide numerical results to validate the impacts of $n$ and $d$ on the costs of the diagonalization technique. All the numerical experiments are conducted in MATLAB R2022a on a personal computer with Microsoft Windows 10 operating system and an Intel$^\circledR$ Core\textsuperscript{TM} i9-10900 CPU @ 2.80 GHz configuration with 64.0 GB of RAM. The implementations are developed by Fang and available from Bitbucket\footnote{https://bitbucket.org/fanglishan/diagonalization/src/master/}. The computational costs for various dimensions~$n$ and degrees~$d$ are given in Examples~\ref{app_e1} and~\ref{app_e2}, respectively.

\begin{example} \label{app_e1}
Consider a homogeneous polynomial
\[
    f = \sum_{i=1}^{n}\left(\lambda_{i1}x_1 + \cdots + \lambda_{in}x_n\right)^5,
\]
where~$\lambda_{ij}$ are randomly generated such that $\{\lambda_{i1},\lambda_{i2},\ldots,\lambda_{in}\}$ are linearly independent. A numerical experiment was conducted to compare the computational costs for diagonalizing~$f$ with~$n=2,\ldots,7$, and the statistics are shown in Table~\ref{tab:time_n}. It includes the execution time of the six major steps of Algorithm~\ref{alg:diagonalization} and the entire program for different~$n$ values. The execution time was estimated as the average elapsed time of ten executions of the diagonalization program, measured in seconds. Table~\ref{tab:time_n} also includes their convergence rates to verify the previous complexity analysis. Note that compute center, compute idempotent, and diagonalize form steps may be executed several times during the iterative process and Table~\ref{tab:time_n} only shows the execution time in the first iteration.
 
Table~\ref{tab:time_n} shows the total execution time of the diagonalization increases in the order of $O(n^{4.26})$, which is markedly lower than the theoretical upper bounds given in Section~\ref{sec:summary}. Among the six steps, the compute center step has the highest convergence rate of $O(n^{6.07})$. While the diagonalize form step has a lower convergence of $O(n^{5.12})$, its execution time in the first iteration takes up over $38\%$ of the total execution time for $n=7$. Since both of these steps involve iterating through $t$ terms of $f$, its costs increase significantly if either $n$ or $d$ increases. 

While the detect nondegeneracy step has a high convergence rate of $O(n^{4.02})$, it only takes 0.14 seconds for $n=7$ and is trivial to the algorithm. The other steps like the detect diagonalizability, compute idempotent, and detect orthogonality all have low convergence compared to their upper bounds. Since the detect orthogonality step only involves a single matrix inversion and multiplication, it has a low impact on the program execution time for all $n$ and shows near-zero convergence.

The computational costs for solving linear system $CX_v =0$ and quadratic system $N$ are important components of the complexity analysis as they are not easily parallelizable. We provide additional execution time and corresponding convergence rates for matrix~$C$ and system~$N$ in Table~\ref{tab:stat_n}, including the solve time, dimension, and sparsity. The dimension refers to the number of equations in the system and the sparsity is the ratio of nonzero entries. Table~\ref{tab:stat_n} shows that the costs for computing the RREF for matrix~$C$ increase drastically in the order of $O(n^{6.66})$. This is reasonable as $C$'s dimension increases with convergence $O(n^{4.50})$ and sparsity only decreases in the order of $O(n^{-1.08})$. In contrast, the costs for solving system~$N$ increase in the order of $O(n^{1.85})$. While the dimension of system~$N$ increases quadratically as shown in Section~\ref{sec:idempotent}, its quadratic equations only involve four variables and were efficiently solved in the experiments.
%do not pose a challenge in terms of complexity.

\end{example}

    \begin{sidewaystable}
    \centering
 %   \vspace*{450pt} % The table was leaning on left for some unknown reason, this spacing is added to balance that
    \caption{Execution time of diagonalization with various $n$}\label{tab:time_n}
    \begin{tabular*}{\textheight}{@{\extracolsep\fill}lccccccc}
		\toprule
		\textbf{Step\,\textbackslash \,\,$n$} & 2 & 3 & 4 & 5 & 6 & 7 & \textbf{Convergence} \\
		\midrule
		detect nondegeneracy & $9.00\times10^{-4}$ & $1.40\times10^{-3}$ & $4.90\times10^{-3}$ & 0.015 & 0.037 & 0.14 & 4.02 \\
		compute center & $1.10\times10^{-3}$ & $4.50\times10^{-3}$ & 0.031 & 0.13 & 0.59 & 1.92 & 6.07 \\
		detect diagonalizability & 0.022 & 0.036 & 0.046 & 0.056 & 0.082 & 0.13 & 1.31 \\
		detect idempotent & 0.099 & 0.19 & 0.39 & 0.72 & 1.22 & 2.31 & 2.50 \\
		diagonalize form & 0.047 & 0.71 & 1.64 & 4.91 & 16.21 & 36.86 & 5.12 \\
		check orthogonality & $1.00\times10^{-3}$ & $1.00\times10^{-3}$ & $1.00\times10^{-3}$ & $1.00\times10^{-3}$ & $1.20\times10^{-3}$ & $1.00\times10^{-3}$ & 0.062 \\
		total & 0.41 & 1.95 & 5.17 & 13.32 & 37.91 & 96.22 & 4.26 \\
		\bottomrule
    \end{tabular*}
    
    \vspace*{20pt}
    
    \caption{Statistics of matrix $C$ and $N$ with various $n$}\label{tab:stat_n}
    \begin{tabular*}{\textheight}{@{\extracolsep\fill}lccccccc}
		\toprule
		\textbf{Step\,\textbackslash \,\,$n$} & 2 & 3 & 4 & 5 & 6 & 7 & \textbf{Convergence} \\
		\midrule
		$C$ solve time & $1.00\times10^{-4}$ & $1.30\times10^{-3}$ & 0.010 & 0.039 & 0.13 & 0.46 & 6.66 \\
		$C$ dimension & 13 & 71 & 261 & 736 & 1667 & 3613 & 4.50 \\
		$C$ sparsity & 1.00 & 0.67 & 0.47 & 0.40 & 0.27 & 0.28 & -1.08 \\
		$N$ solve time & 0.055 & 0.068 & 0.10 & 0.15 & 0.33 & 0.58 & 1.85 \\
		$N$ dimension & 4 & 11 & 22 & 37 & 56 & 79 & 2.38 \\
		\bottomrule
    \end{tabular*}
        
    \vspace*{20pt}
    
    \caption{Execution time of diagonalization with various $d$}\label{tab:time_d}
    \begin{tabular*}{\textheight}{@{\extracolsep\fill}lcccccccc}
		\toprule
		\textbf{Step\,\textbackslash \,\,$d$} & 5 & 10 & 20 & 30 & 40 & 50 & \textbf{Convergence} \\
		\midrule
		detect nondegeneracy & $2.10\times10^{-3}$ & $7.30\times10^{-3}$ & 0.042 & 0.14 & 0.29 & 0.56 & 2.50 \\
		compute center & $5.70\times10^{-3}$ & 0.027 & 0.18 & 0.66 & 1.76 & 3.98 & 2.85 \\
		detect diagonalizability & 0.034 & 0.033 & 0.032 & 0.032 & 0.031 & 0.033 & -0.020 \\
		compute idempotent & 0.18 & 0.18 & 0.18 & 0.19 & 0.18 & 0.18 & $4.10\times10^{-3}$ \\
		diagonalize form & 0.45 & 0.97 & 4.28 & 29.62 & 109.25 & 398.64 & 2.93 \\
		detect orthogonality & $1.00\times10^{-3}$ & $1.00\times10^{-3}$ & $1.10\times10^{-3}$ & $1.20\times10^{-3}$ & $1.00\times10^{-3}$ & $1.00\times10^{-3}$ & -0.40 \\
		Total & 1.58 & 3.85 & 13.69 & 50.16 & 145.09 & 454.08 & 2.39 \\
		\bottomrule
    \end{tabular*}
    
    \end{sidewaystable}

	\begin{example} \label{app_e2}
	Consider a homogeneous polynomial
	\[
		f = \left(x_1 + 2x_2 - x_3\right)^d + \left(2x_1 +2x_2-x_3\right)^d + \left(x_1 - 2x_2 + 2x_3\right)^d,
	\]
where~$d \in \Z^{+}$. We tested the computational costs of the diagonalization algorithm for various degrees $d$. We display statistics for $d=5, 10, 20, 30, 40$ and $50$ in Table~\ref{tab:time_d}, which includes the execution time and convergence rates of each step and the entire program. The execution time of the program increases in the order of $O(d^{2.39})$, which is markedly lower compared to that for $n$ in Example~\ref{app_e1}. Among the six steps, the costs of the compute center and diagonalize form steps increase near cubic order and are significantly higher than the other steps. The theoretical analysis shows that the complexity of the algorithm depends on~$t$, which increases as~$d$ increases as demonstrated in the experiment.
 
In contrast, the costs of the other four steps are similar for different $d$. The upper bounds of their complexity are $O(n^5)$, $O(n^3+\rho)$, and $O(n^3)$, respectively. Since~$d$ does not affect the dimension and solve time for matrices~$B$ and~$N$, it has a trivial influence on their execution time. This example also shows the center-based diagonalization algorithm works for arbitrary $d$.

\end{example}
    
The numerical results in Examples~\ref{app_e1} and~\ref{app_e2} demonstrate that the diagonalization algorithm runs in polynomial time if either~$n$ or~$d$ is fixed. The convergence of each step of the algorithm is generally consistent with the complexity analysis based on factors~$n$,~$d$ and~$t$. In addition, the time-consuming steps like the diagonalize form step are highly parallelizable and the parallel complexity of their polynomial multiplication can be reduced to $O(\log{n})$~\cite{cantor1991fast}.
    
   %. Cantor and Kaltofen~\cite{cantor1991fast} showed that the parallel complexity of their polynomial multiplication algorithm is $O(\log{n})$ instead of $O(n\log{n}\log{\log{n}})$. Thus, it will not affect Huang et al.'s algorithm's practicality. 

\section{Number of polynomial terms}\label{app:no_terms}

The number of terms~$t$ in polynomial $f$ is a critical factor in the complexity analysis of the diagonalization algorithm. We show that the criterion of diagonalizabiilty has polynomial time complexity based on $n$, $d$, and $t$ with additional numerical results in Appendix~\ref{app:cost}. An important question to be answered is the impacts of $t$, which is affected by both $n$ and $d$. The maximum number of terms $t$ in $f$ is bounded by $\binom{n+d-1}{d}$. When both $n$ and $t$ increase, the value of $t$ grows exponentially. However, if either $n$ or~$d$ is fixed, the growth becomes polynomial.

We investigated the growth of $t$ values using a homogeneous polynomial $f$, where
\[
    f\left(x_1,x_2,\ldots,x_n\right) = \left(x_1+x_2+\cdots+x_n\right)^d.
\]
and~$d \in \Z^{+}$. The values of $t$ for various $n$ and $d$ values are shown in Tables~\ref{tab:stat_n_t} and~\ref{tab:stat_d_t}, respectively. Table~\ref{tab:stat_d_t} shows that $t$ increases when $d$ is fixed and $n$ increases. While the convergence rates increase for larger $d$, the growth is still polynomial. Similarly, $t$ increases when $n$ is fixed and $d$ increases in Table~\ref{tab:stat_n_t}. 

	\begin{center}
		\begin{table}%
		\centering
		\caption{number of terms $t$ versus dimension $n$ with fixed degree $d$\label{tab:stat_n_t}}
		\begin{tabular*}{350pt}{@{\extracolsep\fill}cccccccccccc@{\extracolsep\fill}}
		\toprule
		\textbf{$d \backslash n$} & 2 & 3 & 4 & 5 & 6 & 7 & 8 & 9 & 10 & \textbf{Convergence} \\
		\midrule
		3 & 4 & 10 & 20 & 35 & 56 & 84 & 120 & 165 & 220 & 2.50 \\
		4 & 5 & 15 & 35 & 70 & 126 & 210 & 330 & 495 & 715 & 3.10 \\
		5 & 6 & 21 & 56 & 126 & 252 & 462 & 792 & 1287 & 2002 & 3.64 \\
		6 & 7 & 28 & 84 & 210 & 462 & 924 & 1716 & 3003 & 5005 & 4.12 \\
		\bottomrule
		\end{tabular*}
		\end{table}
	\end{center}

\begin{center}
		\begin{table}%
		\centering
		\caption{number of terms $t$ versus degree $d$ with fixed dimension $n$\label{tab:stat_d_t}}
		\begin{tabular*}{350pt}{@{\extracolsep\fill}ccccccccccc@{\extracolsep\fill}}
		\toprule
		\textbf{$n\backslash d$} & 2 & 3 & 4 & 5 & 6 & 7 & 8 & 9 & 10 & \textbf{Convergence} \\
		\midrule
		2 & 3 & 4 & 5 & 6 & 7 & 8 & 9 & 10 & 11 & 0.81 \\
		3 & 6 & 10 & 15 & 21 & 28 & 36 & 45 & 55 & 66 & 1.50 \\
		4 & 10 & 20 & 35 & 56 & 84 & 120 & 165 & 220 & 286 & 2.10 \\
		5 & 15 & 35 & 70 & 126 & 210 & 330 & 495 & 715 & 1001 & 2.64 \\
		\bottomrule
		\end{tabular*}
		\end{table}
\end{center}

Actual~$t$ values are often smaller than the theoretical upper bounds as the coefficients of many terms may be zero. Operations like computing Hessian matrix~$H$ for $f$ may eliminate more terms, further reducing $t$, especially for small $d$. Moreover, routines that depend on $t$ are highly parallelizable. Therefore, the growth of $t$ is polynomial when either $n$ or $d$ is fixed and it will not jeopardize the performance of the diagonalization algorithm.

\bibliographystyle{plain}
\bibliography{diagonalization_complexity}

%\begin{thebibliography}{99}
%\bibitem{HB} %% Books %% surname(s), initial(s), title, publisher, place of publication, year
%Surname1, F., Surname2, F.: Title of the Book, Publisher, Place of publication, Year 
%%Huppert, B., Blackburn, N., Finite Groups II, Springer-Verlag, New York, 1982%%

%\bibitem{LT} %% Journals %% surname(s), initial(s), article, journal, volume, relevant page numbers, year
%Surname1, F., Surname2, F.: Title of the article. \emph{Journal name}, %\textbf{volume}, page1--page2 (Year)
%%Zhou, X. Y.: The contribution of ancient Chinese mathematics. \emph{Acta Math. Sin., Chin. Ser.}, \textbf{65}, 581--598 (2022)%%
%\end{thebibliography}

\end{document}